\documentclass{amsart}

\usepackage{mystyle}

\begin{document}
\begin{center}
\begin{LARGE}

{\bf A Proof of the KNS conjecture : $D_r$ case}

\end{LARGE}

\vspace{1cm}
\begin{large}
Chul-hee Lee
\end{large}

\vspace{0.6cm}
Max-Planck-Institut f\"ur Mathematik, 
Vivatsgasse 7, 53111 Bonn, Germany
\end{center}


\vspace{0.6cm}
\begin{center}
{\bf Abstract}
\end{center}
\vspace{0.4cm}
We prove the Kuniba-Nakanishi-Suzuki (KNS) conjecture concerning the quantum dimension solution of the $Q$-system of type $D_r$ obtained by a certain specialization of classical characters of the Kirillov-Reshetikhin modules. To this end, we use various symmetries of quantum dimensions. As a result, we obtain an explicit formula for the positive solution of the level $k$ restricted $Q$-system of type $D_r$ which plays an important role in dilogarithm identities for conformal field theories.

\section{introduction}
\subsection{Background and motivation}
Let $X$ be a simply laced Dynkin diagram with the index set $I=\{1,2,\cdots, r\}$. For a family of variables $\{ Q^{(a)}_{m}|a\in I, m\in \mathbb{Z}_{\geq 0} \}$, consider recurrences given by 
\begin{equation}\label{4:Qsys}
\left(Q^{(a)}_{m}\right)^2=\prod _{b\in I} \left(Q^{(b)}_{m}\right)^{\mathcal{I}(X)_{ab}}+Q^{(a)}_{m-1}Q^{(a)}_{m+1} \quad, m\ge 1
\end{equation}
where $\mathcal{I}(X)$ denotes the adjacency matrix of $X$. We call it the unrestricted $Q$-system of type $X$. Throughout the paper, we will use the boundary conditions $Q^{(a)}_{0} =1$ for all $a\in I$. 

Let $k\geq 1$ be an integer. We are interested in finding complex solutions of the $Q$-system satisfying another set of boundary conditions $Q^{(a)}_{k}=1$ for all $a\in I$. We define the level $k$ restricted $Q$-system of type $X$ to be the system of equations
\begin{equation}
\left\{
\begin{array}{lll}
Q^{(a)}_{0} =1 & a\in I \\
\left(Q^{(a)}_{m}\right)^2=\prod _{b\in I} \left(Q^{(b)}_{m}\right)^{\mathcal{I}(X)_{ab}}+Q^{(a)}_{m-1}Q^{(a)}_{m+1} & 1\le m <k, a\in I\\
Q^{(a)}_{k} =1 & a\in I
\end{array}
\right.
\end{equation}
in variables $\{Q^{(a)}_{m}| a\in I , 0\leq m \leq k\}$.

One reason to consider it comes from Nahm's conjecture about modularity of $q$-hypergeometric series. See \cite{Nahm, Keegan:2007zq, 2009arXiv0905.3776N} for instance. Another reason to study it can be found in dilogarithm identities for conformal field theories \cite{springerlink:10.1007/BF01840426, MR2804544}. To motivate our investigation, we give a brief exposition of dilogarithm identities related to our main results.

It is known that there exists a special unique solution of the level $k$ restricted $Q$-system possessing positivity and some additional properties as follows :
\begin{thm}\label{5:Quni}
Let $X$ be a Dynkin diagram of type $ADE$ of rank $r$. There exists a unique solution $\mathbf{z}=(z^{(a)}_{m})$ of the level $k$ restricted $Q$-system of type $X$ satisfying $z^{(a)}_{m}>0$ for $0\leq m \leq k$ and $a\in I$. 
For all $a\in I$, the following properties hold :
\begin{enumerate}
\item (symmetry) $z^{(a)}_{m}=z^{(a)}_{k-m}$ for $0\leq m \leq k$, 
\item (unimodality) 
$z^{(a)}_{m-1}<z^{(a)}_{m}$ for $1\le m \le \lfloor\frac{k}{2}\rfloor$ where $\lfloor x\rfloor$ is the floor function. 
\end{enumerate}
\end{thm}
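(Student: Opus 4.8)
The plan is to prove existence, uniqueness, and then the two structural properties in turn, treating the positive solution as a fixed point and exploiting the symmetry built into the $Q$-system.

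For \emph{existence and uniqueness}, I would reformulate the problem variationally. The restricted $Q$-system with these boundary conditions is exactly the critical-point condition for a suitable concave (or log-concave) functional on the open positive orthant $\{z^{(a)}_m>0 : 1\le m\le k-1,\ a\in I\}$. Taking logarithms, set $x^{(a)}_m=\log z^{(a)}_m$; the defining recurrence $(z^{(a)}_m)^2=\prod_b (z^{(b)}_m)^{\mathcal{I}(X)_{ab}}+z^{(a)}_{m-1}z^{(a)}_{m+1}$ should be the Euler--Lagrange equation of a functional built from the Rogers dilogarithm, whose strict concavity on the relevant region gives at most one critical point, while a coercivity/boundedness argument (the boundary values are pinned to $1$, keeping the domain from degenerating) gives existence of a maximizer in the interior. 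This is the standard mechanism behind dilogarithm identities, so I would lean on the thermodynamic-Bethe-ansatz / $Y$-system correspondence to identify the functional explicitly.

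For the \emph{symmetry} property $z^{(a)}_m=z^{(a)}_{k-m}$, the key observation is that the map $m\mapsto k-m$ sends a solution of the restricted $Q$-system to another solution: both boundary conditions $Q^{(a)}_0=Q^{(a)}_k=1$ are preserved, and the recurrence \eqref{4:Qsys} is invariant under reversing the index $m$ since it is symmetric in $m-1$ and $m+1$. Hence if $\mathbf{z}=(z^{(a)}_m)$ is the positive solution, so is $\tilde{\mathbf{z}}=(z^{(a)}_{k-m})$; since $\tilde{\mathbf{z}}$ is also positive, uniqueness (just established) forces $\tilde{\mathbf{z}}=\mathbf{z}$, which is exactly the claimed symmetry.

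For \emph{unimodality}, I would argue that the positive solution is strictly increasing up to the midpoint. The cleanest route is to combine the symmetry with a discrete maximum-principle / convexity argument applied to the recurrence rewritten as $z^{(a)}_{m-1}z^{(a)}_{m+1}=(z^{(a)}_m)^2-\prod_b (z^{(b)}_m)^{\mathcal{I}(X)_{ab}}$: since the product term is strictly positive, one gets $z^{(a)}_{m-1}z^{(a)}_{m+1}<(z^{(a)}_m)^2$, i.e. the sequence $(z^{(a)}_m)_m$ is strictly log-concave in $m$ for each fixed $a$. A strictly log-concave positive sequence that is symmetric about $m=k/2$ is automatically strictly increasing on $0\le m\le \lfloor k/2\rfloor$, which yields the claim. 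I expect the \textbf{main obstacle} to be the existence-and-uniqueness step: pinning down the correct concave functional, verifying its strict concavity on the positive region, and controlling the behavior near the boundary of the orthant carefully enough to guarantee an interior maximizer rather than escape to the boundary. The symmetry and log-concavity steps are comparatively short once uniqueness is in hand.
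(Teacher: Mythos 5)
The paper does not actually prove this theorem: it quotes it and cites \cite[Theorem 5.3.6]{chlee2012} for the proof, so there is no in-paper argument to compare against step by step. Judged on its own terms, two of your three steps are sound \emph{conditional on} existence, uniqueness and positivity. The symmetry argument (the involution $m\mapsto k-m$ preserves the recurrence, the boundary conditions and positivity, so uniqueness forces $z^{(a)}_m=z^{(a)}_{k-m}$) is correct. The unimodality argument is also correct and nicely short: the recurrence gives $z^{(a)}_{m-1}z^{(a)}_{m+1}=(z^{(a)}_m)^2-\prod_b(z^{(b)}_m)^{\mathcal{I}(X)_{ab}}<(z^{(a)}_m)^2$, so $d_m=\log z^{(a)}_m-\log z^{(a)}_{m-1}$ is strictly decreasing, while symmetry gives $d_m=-d_{k+1-m}$; for $m\le\lfloor k/2\rfloor$ one has $m<k+1-m$, hence $d_m>d_{k+1-m}=-d_m$ and $d_m>0$.

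The genuine gap is the existence-and-uniqueness step, which is the linchpin for everything else. You assert that the restricted $Q$-system is the Euler--Lagrange equation of a strictly concave dilogarithm functional in the variables $x^{(a)}_m=\log z^{(a)}_m$, but you neither exhibit the functional nor verify concavity, and the naive version fails: for the map $x\mapsto e^{2x^{(a)}_m}-e^{\sum_b\mathcal{I}(X)_{ab}x^{(b)}_m}-e^{x^{(a)}_{m-1}+x^{(a)}_{m+1}}$ to be a gradient its Jacobian must be symmetric, whereas the $(a,m),(b,m)$ entry is $-\mathcal{I}(X)_{ab}\prod_c(z^{(c)}_m)^{\mathcal{I}(X)_{ac}}$ and the $(b,m),(a,m)$ entry is $-\mathcal{I}(X)_{ba}\prod_c(z^{(c)}_m)^{\mathcal{I}(X)_{bc}}$, which differ in general. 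One must first rescale each equation to symmetrize, and after doing so neither strict concavity nor coercivity on the unbounded open orthant (ruling out maximizing sequences escaping to $z\to 0$ or $z\to\infty$) is evident; you flag this yourself as the main obstacle but leave it unresolved. Without an actual proof of uniqueness your symmetry step, and hence your unimodality step, has nothing to stand on; you would either need to carry out the variational program in full or replace it by a different mechanism (e.g.\ a Perron--Frobenius/maximum-principle comparison of two positive solutions), as is done in the cited reference.
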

See \cite[Theorem 5.3.6]{chlee2012} for a proof. We call the solution $\mathbf{z}=(z^{(a)}_{m})$ characterized in Theorem \ref{5:Quni} the positive solution of the level $k$ restricted $Q$-system.

The Rogers dilogarithm function is defined by 
$$L(x)=-\frac{1}{2}\int_{0}^{x}\frac{\log(1-y)}{y}+\frac{\log(y)}{1-y}\,dy$$
for $x\in (0,1)$. We set $L(0)=0$ and $L(1)=\pi^2/6$ so that $L$ is continuous on $[0,1]$. 

\begin{thm}\cite{MR2804544}
Let $X$ be a simply laced Dynkin diagram of rank $r$ and $\mathfrak{g}$ the corresponding simple Lie algebra. For $1\leq m \leq k-1$ and $a\in I$, let $$x_{m}^{(a)}=\frac{\prod_{b\in I} (z_{m}^{(b)})^{\mathcal{I}(X)_{ab}}}{(z_{m}^{(a)})^2}$$ 
where $\mathbf{z}=(z^{(a)}_{m})$ is the positive solution of the level $k$ restricted $Q$-system of type $X$.
The following dilogarithm identity holds :
\begin{equation}{\label{dilogcft}}
\frac{6}{\pi^2}\sum_{a \in I}\sum_{m=1}^{k-1}L(x_{m}^{(a)})=\frac{k \dim \mathfrak{g}}{h+k}-\operatorname{rank} \mathfrak{g}=\frac{(k-1)h r}{h+k}
\end{equation}
where $h$ denotes the Coxeter number of $\mathfrak{g}$.
\end{thm}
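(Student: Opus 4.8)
The plan is to pass to logarithmic variables and exploit the linear structure hidden in the $Q$-system. Writing $u^{(a)}_m=\log z^{(a)}_m$, the boundary conditions become $u^{(a)}_0=u^{(a)}_k=0$, while the defining recurrence (\ref{4:Qsys}), after dividing by $(z^{(a)}_m)^2$, gives the two identities $\log x^{(a)}_m=\sum_{b\in I}\mathcal{I}(X)_{ab}u^{(b)}_m-2u^{(a)}_m$ and $\log\bigl(1-x^{(a)}_m\bigr)=u^{(a)}_{m-1}+u^{(a)}_{m+1}-2u^{(a)}_m$; in particular positivity of $\mathbf z$ forces $x^{(a)}_m\in(0,1)$, so every $L(x^{(a)}_m)$ is well defined. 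I would also record at the outset that $\dim\mathfrak g=r(h+1)$ in the simply laced case, which makes the two right-hand sides of (\ref{dilogcft}) manifestly equal, so it suffices to prove the middle equality.

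The key step is a constancy lemma. Introduce the symmetric operator $\mathsf A=\mathcal I(X)-2\,\mathrm{Id}$ acting on the index $a$, and the Dirichlet second-difference operator $\mathsf B$, $(\mathsf Bu)^{(a)}_m=u^{(a)}_{m-1}+u^{(a)}_{m+1}-2u^{(a)}_m$, acting on the index $m\in\{1,\dots,k-1\}$; the homogeneous boundary values $u^{(a)}_0=u^{(a)}_k=0$ make $\mathsf B$ symmetric for the standard pairing $\langle u,v\rangle=\sum_{a,m}u^{(a)}_mv^{(a)}_m$. Then $\log x=\mathsf Au$ and $\log(1-x)=\mathsf Bu$. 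Differentiating the Rogers dilogarithm along any smooth family of solutions preserving the boundary data and using $dL(x)=-\tfrac12\bigl(\log(1-x)\,d\log x-\log x\,d\log(1-x)\bigr)$, the total differential of $S=\sum_{a,m}L(x^{(a)}_m)$ collapses to $dS=-\tfrac12\bigl\langle(\mathsf A\mathsf B-\mathsf B\mathsf A)u,\,du\bigr\rangle$. Since $\mathsf A$ and $\mathsf B$ act on different tensor factors they commute, whence $dS=0$: the dilogarithm sum is invariant under continuous deformation of the solution.

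It remains to evaluate the constant. Here I would use the explicit description of $\mathbf z$ furnished by the quantum-dimension (KNS) solution: at level $k$ the $z^{(a)}_m$ are principal specializations of Kirillov--Reshetikhin characters at the root of unity attached to $h+k$, so each $x^{(a)}_m$ and $1-x^{(a)}_m$ acquires a closed trigonometric form. Substituting these into $S$ reduces (\ref{dilogcft}) to a finite sum of Rogers dilogarithms of explicit trigonometric arguments, which I would resolve by the standard trigonometric dilogarithm sum rules together with the symmetry $z^{(a)}_m=z^{(a)}_{k-m}$ from Theorem \ref{5:Quni}. An alternative evaluation, bypassing the explicit solution, is to deform the restricted system in an auxiliary spectral parameter so that the constancy lemma applies, and to read off the constant at a degenerate endpoint where the $x^{(a)}_m$ collapse to boundary values; the target value $\tfrac{\pi^2}{6}\bigl(\tfrac{k\dim\mathfrak g}{h+k}-r\bigr)$ is precisely the effective central charge of the associated level-$k$ restricted system.

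The main obstacle is the evaluation of the constant rather than the constancy itself. The constancy lemma is formal and robust, but because the restricted $Q$-system has, generically, only finitely many solutions, it does not by itself pin down $S$ at $\mathbf z$; one genuinely needs the extra input above. Concretely, the delicate points are (i) verifying the requisite trigonometric dilogarithm identity in the needed generality, and (ii) in the deformation route, controlling the endpoints of the auxiliary family and ensuring the positive solution lies on the same component as the computable configuration. I expect step (i), matching the trigonometric sum to $\tfrac{k\dim\mathfrak g}{h+k}-r$, to be the crux.
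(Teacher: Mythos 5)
First, a remark on scope: the paper states this theorem as a citation to \cite{MR2804544} and offers no proof of its own, so there is no internal argument to compare yours against; I assess the proposal on its own terms.

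Your formal setup is correct: positivity of $\mathbf{z}$ does force $x^{(a)}_m\in(0,1)$, the two logarithmic identities $\log x^{(a)}_m=\sum_b\mathcal{I}(X)_{ab}u^{(b)}_m-2u^{(a)}_m$ and $\log(1-x^{(a)}_m)=u^{(a)}_{m-1}+u^{(a)}_{m+1}-2u^{(a)}_m$ are right, and $\dim\mathfrak{g}=r(h+1)$ does make the two right-hand sides of (\ref{dilogcft}) agree. The constancy computation $dS=-\tfrac12\langle(AB-BA)u,du\rangle=0$ is also a correct formal manipulation (it is the classical Frenkel--Szenes/Gliozzi--Tateo observation). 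But, as you yourself concede, it is vacuous in this setting: the level $k$ restricted $Q$-system is a finite system of polynomial equations whose solution set is discrete, so there is no smooth family of solutions with the fixed boundary data along which to deform, and the lemma says nothing about the value of $S$ at the positive solution. Everything therefore rests on the ``evaluation of the constant,'' and that step is not carried out. The ``standard trigonometric dilogarithm sum rules'' you invoke do not exist in the required generality: already for $X=A_1$ the needed identity is the nontrivial Richmond--Szekeres/Kirillov identity, and for general simply laced $X$ no direct trigonometric evaluation is available; the actual proof in \cite{MR2804544} proceeds by entirely different means, namely periodicity of the level $k$ restricted $Y$-system and constancy of the dilogarithm sum along cluster mutations, which replaces your nonexistent continuous deformation by a discrete path of mutations along which the sum telescopes. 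Your alternative ``auxiliary spectral parameter'' route is likewise only named, with its two delicate points (connectivity of the family, computability of the endpoint) left open. In short, the proposal correctly isolates the crux but then reduces the theorem to an unproven identity of at least the same depth; that is a genuine gap, not a proof.
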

For a physical interpretation of the rational number on the right hand side of (\ref{dilogcft}), see \cite[Theorem 5.2]{1751-8121-44-10-103001} and references given there.

In this paper, we will study the positive solution of the level $k$ restricted $Q$-system of type $X$ using Lie theory. Before stating our main results, let us set up notation and terminology.

\subsection{Notation}
Let $X$ be a simply laced Dynkin diagram and $\mathcal{C}=(a_{ij})$ the Cartan matrix. Let $\mathfrak{g}$ be the corresponding simple Lie algebra of rank $r$ and $\mathfrak{h}$ its Cartan subalgebra. We denote the dual space of $\mathfrak{h}$ by $\mathfrak{h}^{*}$ and use the symbol $\langle \cdot ,\cdot \rangle$ to denote the natural pairing between $\mathfrak{h}$ and $\mathfrak{h}^{*}$. 

Let $\Phi\subset \mathfrak{h}^{*}$ be the root system. We denote the set of positive roots by $\Phi^{+}$ and the set of simple roots by $\Pi=\{\alpha_{i}|i\in I\}$. We will write $\alpha>0$ if $\alpha\in \Phi^{+}$ and $\alpha\geq \beta$ if $\alpha-\beta \in \Phi^{+}$ or $\alpha=\beta$.
For $\alpha=\sum_{i=1}^{r}c_i\alpha_{i}\in \Phi$, we define its height, denoted by $\operatorname{ht} \alpha$, to be $\sum_{i=1}^{r} c_i$.

Let $\theta=\sum_{i=1}^{r}a_i\alpha_{i}$ be the highest root. We call $a_i$ the marks and set $a_0=1$. 
See Figures \ref{pic:D} and \ref{pic:extD} for $X=D_r$. We denote the Coxeter number by $h$, which is given by $1+\operatorname{ht} \theta=\sum_{i=0}^{r}a_i$. Let $(\cdot |\cdot )$ be the standard symmetric bilinear form on $\mathfrak{h^{*}}$ normalized by requiring that $(\theta|\theta)=2$. 

Let $Q$ be the root lattice and $P$ be the weight lattice. The coroot lattice, denoted by $Q^{\vee}$ is the $\mathbb{Z}$-dual of the weight lattice $P$. We will choose the basis $\Pi^{\vee}=\{h_{i}\in \mathfrak{h}|i\in I\}$ of the coroot lattice so that $\langle \alpha_{i},h_j\rangle=a_{ji}$. Let $\{\omega_{i}\in P|i\in I\}$ be the dual basis of $P$ for $\Pi^{\vee}$ so that $\langle\omega_{i},h_{j}\rangle=\delta_{ij}$. We call $\omega_{i}$ the fundamental weights. A dominant weight is an element of $P_{+}=\{\sum_{i=1}^{r}\lambda_{i}\omega_{i}\in P|\lambda_{i}\ge 0, i\in I\}$. We call  $\rho=\sum_{i=1}^{r}\omega_{i}\in P$ the Weyl vector.
 
We have the group algebra $\mathbb{C}[P]$ with $\mathbb{C}$-basis of elements of the form $e^{\lambda}$, $\lambda \in P$. We can regard $e^{\lambda}$ as a function defined on $\mathfrak{h}^{*}$ by $\mu \mapsto e^{2\pi i (\lambda|\mu)}$. For a dominant weight $\lambda\in P_{+}$, the character $\chi_{\lambda}\in \mathbb{C}[P]$ of an irreducible representation $V$ of highest weight $\lambda$ is defined to be
$$\sum_{\lambda' \in \mathfrak{h}^{*}} (\dim{V_{\lambda'}})e^{\lambda'}$$
where $V_{\lambda'}$ denotes the weight space corresponding to $\lambda' \in \mathfrak{h}^{*}$. We will regard $\chi_{\lambda}$ as a function on $\mathfrak{h}^{*}$.

\begin{figure}\label{pic:D}
\begin{tikzpicture}
\begin{scope}[start chain]
\dnode{1}
\dnode{2}
\dnode{3}
\dydots
\dnode{r-2}
\dnode{r-1}
\end{scope}
\begin{scope}[start chain=br going above]
\chainin(chain-2);
\dnodebr{0};
\end{scope}
\begin{scope}[start chain=br2 going above]
\chainin(chain-5);
\dnodebr{r};
\end{scope}
\end{tikzpicture}
\caption{The extended Dynkin diagram of type $D_r^{(1)}$ and the labeling of the nodes}
\end{figure}
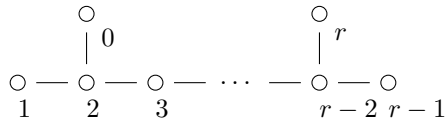

\subsection{Statement of the KNS conjecture and the main theorem}
Let $q$ be a non-zero complex number which is not a root of unity. The Kirillov-Reshetikhin (KR) modules form a special class of finite dimensional modules of the quantum affine algebra $U_{q}(\hat{\mathfrak{g}})$ and they are parametrized by $a\in I$, $m\in \mathbb{Z}_{\geq 0}$ and $u\in \mathbb{C}$. Since the quantized universal enveloping algebra $U_{q}(\mathfrak{g})$ is contained in $U_{q}(\hat{\mathfrak{g}})$ as a subalgebra, for a given KR module $W^{(a)}_{m}(u)$, we can get the finite dimensional $U_{q}(\mathfrak{g})$-module ${\rm res}\, W^{(a)}_{m}(u)$ by restriction.

The important point to note here is the fact that the classical characters $Q^{(a)}_m$ of ${\rm res}\, W^{(a)}_{m}(u)$ for $a\in I$ and $m\in \mathbb{Z}_{\geq 0}$ satisfy the unrestricted $Q$-system, which was first stated in \cite{Kirillov1990} and later proved in \cite{MR1993360} and \cite{MR2254805}.

The character $Q^{(a)}_m$ can be expanded into a sum of characters of irreducible modules of $\mathfrak{g}$ as
\begin{equation}\label{4:Qdecomposition2}
Q^{(a)}_{m}=\sum_{\omega \in P_{+}}Z(a, m,\omega)\chi_{\omega}
\end{equation}
where $Z(a, m,\omega)$ is a certain non-negative integer with $Z(a, m,m\omega_a)=1$.
For example, when $X=A_r$, we have $Q^{(a)}_{m}=\chi_{m \omega_{a}}$
for $a\in I$ and $m\in \mathbb{Z}_{\geq 0}$ and they satisfy the unrestricted $Q$-system of type $A_r$. 
For $X=D_r$, it is given by
\begin{equation}\label{6:qdecompD}
Q^{(a)}_{m}= \left\{
\begin{array}{lll}
\sum \chi_{k_{a}\omega_{a}+k_{a-2}\omega_{a-2}+\cdots + k_{1} \omega_{1}}&  1\leq a \leq r-2, a\equiv 1 \pmod 2, \\
\sum \chi_{k_{a}\omega_{a}+k_{a-2}\omega_{a-2}+\cdots + k_{0} \omega_{0}} & 1\leq a \leq r-2, a\equiv 0 \pmod 2, \\
\chi_{m \omega_{a}} & a=r-1,r
\end{array}
\right.
\end{equation}
where $\omega_0=0$ and the summation is over all nonnegative integers satisfying $k_{a}+k_{a-2}+\cdots + k_{1}=m$ for $a$ odd and $k_{a}+k_{a-2}+\cdots + k_{0}=m$ for $a$ even.

For a treatment of more general cases, see \cite[Appendix A]{MR1745263}, \cite[Section 13]{1751-8121-44-10-103001} and references given there.

If we regard $Q^{(a)}_{m}$ as a sum of characters given by (\ref{4:Qdecomposition2}), we can specialize it at the element $\frac{\rho}{h+k}\in \mathfrak{h}^{*}$. For each $a\in I$ and $m\in \mathbb{Z}_{\geq 0}$, we define $z^{(a)}_{m}$ by
$$z^{(a)}_{m}=Q^{(a)}_{m}(\frac{\rho}{h+k})=\sum_{\omega \in P_{+}}Z(a, m,\omega)\chi_{\omega}(\frac{\rho}{h+k}).$$ 
This yields a solution of the unrestricted $Q$-system and we call it the quantum dimension solution of the $Q$-system. In \cite{1751-8121-44-10-103001}, it has been conjectured that it gives the positive solution of the level $k$ restricted $Q$-system and satisfies some additional level truncation properties.
\begin{conjecture} \cite[Conjecture 14.2.]{1751-8121-44-10-103001}\label{KNSconj}
Let $z^{(a)}_{m}=Q^{(a)}_{m}(\frac{\rho}{h+k})$ for $a\in I$ and $m\in \mathbb{Z}_{\geq 0}$. For all $a\in I$, the following properties hold :
\begin{enumerate}
\item (positivity) $z^{(a)}_{m}>0$ for $0\leq m \leq k$,
\item (symmetry) $z^{(a)}_{m}=z_{k-m}^{(a)}$ for $1\leq m \leq k-1$,
\item (unit boundary condition) $z^{(a)}_{k}=1$,
\item (unimodality) 
$z_{m-1}^{(a)}<z^{(a)}_{m}$ holds true for $1\le m \le \lfloor\frac{k}{2}\rfloor$ where $\lfloor x\rfloor$ is the floor function,
\item (occurrence of 0)  $z^{(a)}_{k+1}=z^{(a)}_{k+2}=\cdots =z^{(a)}_{k+h-1}=0$.
\end{enumerate}
\end{conjecture}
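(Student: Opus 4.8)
The plan is to identify the quantum dimension solution with the positive solution of Theorem \ref{5:Quni} and then read off the five properties, the only genuinely new input being a package of symmetries of the numbers $z^{(a)}_m$. Since each $Q^{(a)}_m$ lies in $\mathbb{C}[P]$ and evaluation at $\frac{\rho}{h+k}$ is an algebra homomorphism $\mathbb{C}[P]\to\mathbb{C}$, the family $(z^{(a)}_m)$ already solves the unrestricted $Q$-system, and $z^{(a)}_0=1$ because $Q^{(a)}_0=1$. So it suffices to establish, directly from the Lie theory, (i) the symmetry $z^{(a)}_m=z^{(a)}_{k-m}$, (ii) positivity for $0\le m\le k$, and (iii) the occurrence of zeros. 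Property (i) at $m=0$ gives the unit boundary condition $z^{(a)}_k=1$, and then (i) and (ii) show that the truncation $(z^{(a)}_m)_{0\le m\le k}$ is a positive solution of the level $k$ restricted $Q$-system; by the uniqueness in Theorem \ref{5:Quni} it coincides with the positive solution, whence unimodality is inherited. The logical order matters: I would prove the symmetry \emph{first}, because I will need it to deduce positivity in the range $m>k/2$.

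The computational engine is the Weyl character formula: specialization at $\frac{\rho}{h+k}$ turns $\chi_\lambda$ into the quantum dimension
\begin{equation*}
\chi_\lambda\Bigl(\tfrac{\rho}{h+k}\Bigr)=\prod_{\alpha>0}\frac{\sin\bigl(\tfrac{\pi(\lambda+\rho|\alpha)}{h+k}\bigr)}{\sin\bigl(\tfrac{\pi(\rho|\alpha)}{h+k}\bigr)}.
\end{equation*}
Three features of this expression drive everything. First, it is strictly positive whenever $\lambda$ is dominant of level at most $k$, since then $0<(\lambda+\rho|\alpha)<h+k$ for every $\alpha>0$. Second, it vanishes precisely when $(\lambda+\rho|\alpha)\in(h+k)\mathbb{Z}$ for some $\alpha>0$. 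Third, read as a function of weights it is anti-invariant under the shifted level $h+k$ action of the affine Weyl group, and invariant under the simple currents, namely the automorphisms of the affine diagram $D_r^{(1)}$ coming from $P/Q$, as well as under the charge-conjugation automorphism of the finite diagram interchanging the two spinor nodes.

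I would first treat the spinor nodes $a=r-1,r$, where $Q^{(a)}_m=\chi_{m\omega_a}$ is a single character. Writing $m\omega_{r-1}$ in affine coordinates and applying the appropriate affine diagram automorphism sends it to $(k-m)\omega_{r-1}$ when $r$ is even (the order-two flip exchanging the $0$ and $(r-1)$ nodes), and to $(k-m)\omega_r$ when $r$ is odd (an order-four simple current), the latter being returned to $(k-m)\omega_{r-1}$ by charge conjugation; invariance of the quantum dimension then yields $z^{(r-1)}_m=z^{(r-1)}_{k-m}$, and symmetrically for $a=r$. For the remaining nodes $1\le a\le r-2$ one must push the same symmetry through the explicit decomposition (\ref{6:qdecompD}). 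The subtle point is that the summands $\lambda=k_a\omega_a+k_{a-2}\omega_{a-2}+\cdots$ can have level as large as $2m$, so for $m>k/2$ some of them leave the fundamental alcove. I would fold each such out-of-alcove weight back by the shifted affine Weyl action: those on a wall contribute $0$, and the surviving signed terms are matched, via the same simple current, against the (entirely in-alcove) summands of $Q^{(a)}_{k-m}$, giving (i). Positivity (ii) is then immediate: for $0\le m\le\lfloor k/2\rfloor$ every summand has level at most $2m\le k$ and hence strictly positive quantum dimension, so $z^{(a)}_m>0$; for $m>k/2$ one invokes (i) to reduce to this range, while $z^{(a)}_k=1>0$.

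Finally, for the occurrence of zeros I would again separate the two kinds of nodes. For a spinor node it is a direct computation: evaluating $(m\omega_{r-1}+\rho|\alpha)$ on the roots $e_i\pm e_j$ shows that as $m$ runs over $k+1,\dots,k+h-1$ (with $h=2r-2$) there is always a positive root $\alpha$ with $(m\omega_{r-1}+\rho|\alpha)=h+k$, so one sine factor vanishes and $z^{(r-1)}_m=0$ term by term. For $1\le a\le r-2$ no single summand need vanish, and one must instead show that the signed fold of the multiset in (\ref{6:qdecompD}) into the alcove is identically zero throughout this range of $m$. This cancellation, carried out uniformly in $a$ and in the parity of $r$, is what I expect to be the main obstacle, since it is exactly here that the branching rule (\ref{6:qdecompD}), the affine Weyl anti-invariance, and the simple current must all be controlled at once. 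Once (i)--(iii) are in place, the identification with the positive solution of Theorem \ref{5:Quni} completes the proof.
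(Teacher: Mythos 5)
Your framework is the right one, and your treatment of the single-character nodes $a\in\{1,r-1,r\}$ (positivity from the alcove, symmetry from the extended diagram automorphism, zeros from the product formula) matches the paper's Propositions \ref{tipsprop} and \ref{1prop}. But the two steps you yourself flag as ``the main obstacle'' --- the full symmetry $z^{(a)}_m=z^{(a)}_{k-m}$ for $2\le a\le r-2$ in the range $m>k/2$, and the vanishing $z^{(a)}_{k+j}=0$ for these middle nodes --- are exactly the steps you do not carry out, and they are genuine gaps: folding the entire multiset of summands of (\ref{6:qdecompD}) back into the alcove with signs and matching the survivors against the summands of $Q^{(a)}_{k-m}$ is a substantial combinatorial problem for which your sketch supplies no mechanism.

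The paper is structured precisely to avoid both cancellation analyses by using the $Q$-system recursion itself as the propagation tool. For the symmetry and unit boundary condition it proves only a single midpoint identity, $z^{(a)}_s=z^{(a)}_{s+1}$ for $k$ odd and $z^{(a)}_{s-1}=z^{(a)}_{s+1}$ for $k$ even: in that range the only out-of-alcove summands have $k_0\in\{-1,-2\}$, and they are either annihilated by the stabilizing reflections $s_0$ or $s_0s_2s_0$ or paired off via the swap $\hat{\omega}_0\leftrightarrow\hat{\omega}_1$ (Propositions \ref{evence}--\ref{oddce2}). It then defines $w^{(a)}_m$ as the reflection of the first half of $z$, observes that the midpoint identity makes $\mathbf{w}$ a positive solution of the level $k$ restricted system agreeing with $z$ in rows $0$ and $1$, and invokes Lemma \ref{zeq} to conclude $z=w$, which delivers symmetry, positivity and $z^{(a)}_k=1$ simultaneously. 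Likewise the zeros are computed directly only at the node $a=1$; Proposition \ref{kpl10} and repeated application of Lemma \ref{arr0} then propagate $z^{(a)}_{k+j}=0$ across the Dynkin diagram and in $j$ purely from the recursion, with no character-level cancellation needed. To complete your proposal you would need either to supply the missing cancellation arguments in full, or to adopt this propagation strategy, which reduces the hard combinatorics to the small, explicitly controllable midpoint case.
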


We call Conjecture \ref{KNSconj} the KNS conjecture. The conjecture was originated from some unproven claims in \cite{springerlink:10.1007/BF01840426} whose motivation can be found in the study of thermodynamic properties of the RSOS models \cite{MR1017122}. Then it was subsequently formulated as above in \cite{MR1304818} with more general specializations. The conjecture had been proved only in the case of type $A_r$. In this paper, we will prove the following.

\begin{thmnn}\label{mainthmDr}
The KNS conjecture is true for $X=D_r$. Moreover, $z^{(a)}_{k+h}=1$ for $1\le a\le r-2$, 
$z^{(r-1)}_{k+h}=z^{(r)}_{k+h}=1$ when $r\equiv 0,1 \pmod 4$ and $z^{(r-1)}_{k+h}=z^{(r)}_{k+h}=-1$ when $r\equiv 2,3 \pmod 4$.
\end{thmnn}

Since our proof crucially depends on (\ref{4:Qdecomposition2}), we cannot properly deal with the exceptional types where the decompositions are still largely conjectural. There are more general $Q$-systems including non-simply laced types, for which we need to modify (\ref{4:Qsys}) into a slightly more complicated form. Thus we focus on the case of type $D_r$ to see the central idea clearly. A proof for other classical types will be given in a forthcoming paper.

This result will be divided into several parts and will be proved in Theorem \ref{main1}, \ref{main2} and \ref{main3}. The most tricky part lies in proving the unit boundary condition. Once we prove it, many results follow from Theorem \ref{5:Quni}. The key ingredients of our proof are the affine Weyl group symmetry and the extended Dynkin diagram symmetry of quantum dimensions of the affine weights obtained by suitable affinizations of classical weights. Although these are well-known concepts, they have not been effectively employed to attack our problem. 

In Section \ref{sec:qdim} we review necessary results about quantum dimensions. Section \ref{sec:comp} contains some preliminary calculations involving the affine Weyl group, which will be used in Section \ref{sec:mainpf} where we prove our main results.

\section{Review on quantum dimensions}\label{sec:qdim}
\begin{figure}\label{pic:extD}
\begin{tikzpicture}
\begin{scope}[start chain]
\dnode{1}
\dnode{2}
\dnode{2}
\dydots
\dnode{2}
\dnode{1}
\end{scope}
\begin{scope}[start chain=br going above]
\chainin(chain-2);
\dnodebr{1};
\end{scope}
\begin{scope}[start chain=br2 going above]
\chainin(chain-5);
\dnodebr{1};
\end{scope}
\end{tikzpicture}
\caption{The extended Dynkin diagram of type $D_r^{(1)}$ and the marks.}
\end{figure}
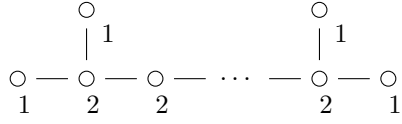

In this section we summarize some of the standard facts on quantum dimensions without proofs. For a thorough treatment we refer the reader to \cite[Section 16.3]{philippe1997conformal}.

\begin{definition}
Let $\hat{P}$ be the lattice generated by $\hat{\omega}_0,\hat{\omega}_1,\cdots, \hat{\omega}_r$. We define $\hat{P}^k$ to be $\{\sum_{i=0}^{r}\lambda_{i}\hat{\omega}_{i}\in \hat{P}|\sum_{i=0}^{r}a_{i}\lambda_{i}=k\}$ where $a_0=1$. 
We will denote the set $\{\sum_{i=0}^{r}\lambda_{i}\hat{\omega}_{i}\in \hat{P}^k|\lambda_{i}\ge 0\}$ by $\hat{P}_{+}^{k}$.
For a weight $\omega=\sum_{i=1}^{r}\lambda_{i}\omega_{i}\in P$, we define its level $k$ affinization $\hat{\omega}\in \hat{P}^k$ to be $\sum_{i=0}^{r}\lambda_{i}\hat{\omega}_{i}$. Note that $\lambda_0\in \mathbb{Z}$ is uniquely determined by the requirement $\hat{\omega} \in \hat{P}^k$.

Let $\alpha_0=-\theta$ and $\hat{\alpha}_j=\sum_{i=0}^{r} (\alpha_j|\alpha_i) \hat{\omega}_{i}$. We define the fundamental reflections $s_0,s_1,\cdots s_r$ on $\hat{P}$ linearly by 
$$s_i \hat{\omega}_j=\hat{\omega}_j -\delta_{ij}\hat{\alpha}_i$$
where $\delta_{ij}$ denotes the Kronecker delta. They generate the affine Weyl group $W$. The signature of $w\in W$ will be denoted by $(-1)^{\ell(w)}$ where $\ell(w)$ is the length of $w\in W$. 

Let $\hat{\rho}=\sum_{i=0}^{r}\hat{\omega}_{i}\in \hat{P}$. We define the shifted affine Weyl group action on the set $\hat{P}$ by $$w\cdot \hat{\lambda}=w(\hat{\lambda}+\hat{\rho})-\hat{\rho}$$ for $w\in W$. 
\end{definition}

\begin{definition}
Let $\lambda \in P$ and $\hat{\lambda}\in \hat{P}^{k}$ be its level $k$ affinization. The quantum dimension \index{quantum dimension} or $q$-dimension \index{$q$-dimension} of $\hat{\lambda}$ is defined by
\begin{equation}\label{6:qdimdef}
\mathcal{D}_{\hat{\lambda}}=\chi_{\lambda}\left(\frac{\rho}{h+k}\right)= \frac{\prod_{\alpha>0}\sin \frac{\pi(\lambda+\rho|\alpha)}{h+k}}{ \prod_{\alpha>0}\sin \frac{\pi (\rho|\alpha)}{h+k}}.
\end{equation}
\end{definition}

\begin{theorem} \label{6:qdimpos}
Let $\lambda=\sum_{i=1}^{l} \lambda_i \omega_i\in P_{+}$ be a dominant weight such that $\sum_{i=1}^{l}a_i\lambda_i \leq k$. For its level $k$ affinization $\hat{\lambda} \in \hat{P}_{+}^{k}$, $\mathcal{D}_{\hat{\lambda}}>0$. 
\end{theorem}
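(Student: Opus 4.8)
The plan is to show that in the product formula
$$\mathcal{D}_{\hat{\lambda}}=\frac{\prod_{\alpha>0}\sin \frac{\pi(\lambda+\rho|\alpha)}{h+k}}{\prod_{\alpha>0}\sin \frac{\pi (\rho|\alpha)}{h+k}}$$
every sine factor, in both numerator and denominator, is strictly positive; positivity of the quotient then follows at once. Since $X$ is simply laced and the form is normalized by $(\theta|\theta)=2$, every root satisfies $(\alpha|\alpha)=2$, so $(\mu|\alpha)=\langle\mu,\alpha^{\vee}\rangle$ for any $\mu\in\mathfrak{h}^{*}$. In particular $(\rho|\alpha)=\langle\rho,\alpha^{\vee}\rangle=\operatorname{ht}\alpha$, which for $\alpha\in\Phi^{+}$ takes values between $1$ and $\operatorname{ht}\theta=h-1$. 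Hence $0<\pi(\rho|\alpha)/(h+k)<\pi$ for every positive root, so each denominator factor $\sin\frac{\pi(\rho|\alpha)}{h+k}$ is positive.

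For the numerator I would bound $(\lambda+\rho|\alpha)$ from both sides for each $\alpha\in\Phi^{+}$. The lower bound is immediate: dominance of $\lambda$ gives $(\lambda|\alpha)=\langle\lambda,\alpha^{\vee}\rangle\ge 0$, and $(\rho|\alpha)\ge 1$, so $(\lambda+\rho|\alpha)>0$. For the upper bound the key observation is that $\lambda+\rho$ is dominant and that a dominant weight pairs maximally with $\theta$: since $\theta-\alpha$ is a nonnegative integral combination of simple roots for every $\alpha\in\Phi^{+}$, one has $(\lambda+\rho|\theta-\alpha)\ge 0$, whence $(\lambda+\rho|\alpha)\le(\lambda+\rho|\theta)$. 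It therefore suffices to estimate the single quantity $(\lambda+\rho|\theta)$.

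To finish I would compute $(\lambda+\rho|\theta)$ explicitly. Using $\theta^{\vee}=\sum_{i}a_i h_i$ (marks equal comarks in the simply laced case) one gets $\langle\omega_i,\theta^{\vee}\rangle=a_i$, so $(\lambda|\theta)=\sum_i a_i\lambda_i$, while $(\rho|\theta)=\operatorname{ht}\theta=h-1$. The level condition $\sum_i a_i\lambda_i\le k$ then yields
$$(\lambda+\rho|\alpha)\le(\lambda+\rho|\theta)=\sum_i a_i\lambda_i+h-1\le k+h-1<h+k.$$
Combined with the lower bound, $0<\pi(\lambda+\rho|\alpha)/(h+k)<\pi$ for all $\alpha\in\Phi^{+}$, so every numerator factor is positive and hence $\mathcal{D}_{\hat{\lambda}}>0$.

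I do not expect a serious obstacle here, since the argument is a direct estimate on the arguments of the sines. The only point requiring care is the upper bound, where one must simultaneously use the reduction to $\alpha=\theta$ (valid because $\lambda+\rho$ is dominant) and the defining inequality $\sum_i a_i\lambda_i\le k$ of $\hat{P}_{+}^{k}$; it is precisely this inequality, together with the fact that $\operatorname{ht}\theta=h-1$ is strictly less than $h$, that keeps the argument of every sine strictly below $\pi$ and prevents any factor from vanishing or changing sign.
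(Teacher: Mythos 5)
Your argument is correct and complete: the denominator factors are positive because $0<(\rho|\alpha)=\operatorname{ht}\alpha\le h-1<h+k$, and the numerator factors are positive because dominance of $\lambda+\rho$ reduces the upper bound to $(\lambda+\rho|\theta)=\sum_i a_i\lambda_i+h-1\le k+h-1<h+k$, so every sine has argument strictly in $(0,\pi)$. The paper states this theorem without proof, citing \cite[Section 16.3]{philippe1997conformal}; your estimate is precisely the standard argument given there, so there is nothing to add.
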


The shifted action of the affine Weyl group will be crucial in studying quantum dimensions as the following results show.

\begin{theorem}\label{6:Sfixed0}
For $\hat{\lambda}\in \hat{P}^{k}$ and $w\in W$, $\mathcal{D}_{w\cdot \hat{\lambda}}=(-1)^{\ell(w)}\mathcal{D}_{\hat{\lambda}}$. If $w\in W$ is an element of odd signature and $w\cdot \hat{\lambda}=\hat{\lambda}$, then $\mathcal{D}_{\hat{\lambda}}=0$.
\end{theorem}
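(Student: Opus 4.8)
The plan is to prove the anti-invariance $\mathcal{D}_{w\cdot\hat\lambda}=(-1)^{\ell(w)}\mathcal{D}_{\hat\lambda}$ first; the vanishing assertion is then immediate, since if $w$ has odd signature with $w\cdot\hat\lambda=\hat\lambda$ then $\mathcal{D}_{\hat\lambda}=\mathcal{D}_{w\cdot\hat\lambda}=-\mathcal{D}_{\hat\lambda}$, forcing $\mathcal{D}_{\hat\lambda}=0$. Throughout I abbreviate $\kappa=h+k$ and write $N(\mu)=\prod_{\alpha>0}\sin\frac{\pi(\mu|\alpha)}{\kappa}$ for $\mu\in\mathfrak{h}^{*}$, so that by (\ref{6:qdimdef}) one has $\mathcal{D}_{\hat\lambda}=N(\lambda+\rho)/N(\rho)$.

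The shifted action is built so that $\hat\mu:=\hat\lambda+\hat\rho$ transforms linearly: $\widehat{w\cdot\lambda}+\hat\rho=w\hat\mu$. Projecting the defining relations $s_i\hat\omega_j=\hat\omega_j-\delta_{ij}\hat\alpha_i$ to the classical weight space (discarding the $\hat\omega_0$-coordinate and identifying $\hat\omega_i$ with $\omega_i$ for $i\ge1$), a direct computation using $(\omega_i|\alpha_j)=\delta_{ij}$ shows that the finite generators $s_1,\dots,s_r$ act on $\bar\mu:=\lambda+\rho$ as the ordinary simple reflections $\bar\mu\mapsto\bar\mu-(\bar\mu|\alpha_i)\alpha_i$, while $s_0$ acts as the affine reflection $\bar\mu\mapsto s_\theta\bar\mu+\kappa\theta$ in the hyperplane $(\bar\mu|\theta)=\kappa$. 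Hence $\mathcal{D}_{w\cdot\hat\lambda}=N(w\bar\mu)/N(\rho)$, where $w$ now acts through this affine action on $\mathfrak{h}^{*}$. Since the signature $w\mapsto(-1)^{\ell(w)}$ is a homomorphism, it suffices to prove $N(s_i\bar\mu)=-N(\bar\mu)$ for each generator $0\le i\le r$: an arbitrary expression of length $n$ then contributes $(-1)^{n}=(-1)^{\ell(w)}$, independently of the chosen word.

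For $1\le i\le r$ this is the familiar computation: $s_i$ is an isometry for $(\cdot|\cdot)$ that permutes $\Phi^{+}\setminus\{\alpha_i\}$ and sends $\alpha_i\mapsto-\alpha_i$, so reindexing the product and using $\sin(-x)=-\sin(x)$ gives $N(s_i\bar\mu)=-N(\bar\mu)$. The crux is the affine generator $s_0$, where the translation part $\kappa\theta$ must be absorbed. From $(s_0\bar\mu|\alpha)=(\bar\mu|s_\theta\alpha)+\kappa(\theta|\alpha)$ and the integrality $(\theta|\alpha)\in\mathbb{Z}$, the periodicity $\sin(x+\pi n)=(-1)^{n}\sin x$ yields $N(s_0\bar\mu)=(-1)^{\sum_{\alpha>0}(\theta|\alpha)}\,N(s_\theta\bar\mu)$. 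Here $\sum_{\alpha>0}(\theta|\alpha)=(\theta|2\rho)=2(h-1)$ is even, whereas $s_\theta$ lies in the finite Weyl group with $N(s_\theta\bar\mu)=(-1)^{\ell(s_\theta)}N(\bar\mu)$ and $\ell(s_\theta)=2\operatorname{ht}\theta-1=2h-3$ odd; multiplying the two signs gives $N(s_0\bar\mu)=-N(\bar\mu)$, as needed.

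I expect the parity bookkeeping for $s_0$ to be the only real obstacle: one must check both that the affine translation feeds into the sines solely through the integers $(\theta|\alpha)$, and that the translation sign $(-1)^{\sum_{\alpha>0}(\theta|\alpha)}$ and the linear-reflection sign $(-1)^{\ell(s_\theta)}$ combine to $-1$. Everything else is routine. As a sanity check and an alternative route, all of this is encoded in the Weyl--Kac denominator formalism for affine Lie algebras as developed in \cite[Section 16.3]{philippe1997conformal}, where $N(\mu)$ appears as an alternating sum over $W$ and the anti-invariance is manifest; the elementary argument above is essentially an unwinding of that package.
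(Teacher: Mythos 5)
Your argument is correct, but there is nothing in the paper to compare it against: Theorem \ref{6:Sfixed0} appears in Section \ref{sec:qdim}, which the author explicitly presents as a summary of ``standard facts on quantum dimensions without proofs,'' deferring to \cite[Section 16.3]{philippe1997conformal}. Your write-up therefore supplies a self-contained proof where the paper supplies only a citation. I checked the points you flag as delicate and they hold: with $\hat\mu=\hat\lambda+\hat\rho$ of level $\kappa=h+k$, the zeroth coordinate is $\mu_0=\kappa-(\bar\mu|\theta)$ (using $\theta^\vee=\theta$ in the simply laced normalization), so $s_0$ does project to $\bar\mu\mapsto s_\theta\bar\mu+\kappa\theta$; the translation enters each sine through the integers $(\theta|\alpha)$ and contributes $(-1)^{(\theta|2\rho)}=(-1)^{2(h-1)}=+1$; and $s_\theta$, being a reflection, has odd length (indeed $\ell(s_\theta)=2h-3$, though only the parity via $\det s_\theta=-1$ is needed), giving $N(s_0\bar\mu)=-N(\bar\mu)$. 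Together with the standard computation for $s_1,\dots,s_r$ and the well-definedness of the sign character on a Coxeter group, this yields the anti-invariance, and the vanishing statement follows as you say. The only cosmetic caveat is that the paper's definition (\ref{6:qdimdef}) is taken as the product formula for arbitrary $\hat\lambda\in\hat P^k$ (the equality with $\chi_\lambda(\rho/(h+k))$ being meaningful only for dominant $\lambda$), which is exactly the reading your proof uses; your closing remark correctly identifies the argument as an unwinding of the Weyl--Kac denominator formalism the paper implicitly invokes.
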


\begin{theorem}\label{alcoverep}
If $\mathcal{D}_{\hat{\lambda}}\neq 0$ for $\hat{\lambda}\in \hat{P}^{k}$, then we can find a unique element $ \hat{\lambda}'\in \hat{P}_{+}^{k}$ such that $\hat{\lambda}'=w\cdot \hat{\lambda}$ for some $w\in W$.
\end{theorem}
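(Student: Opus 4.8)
The plan is to translate the statement into the standard geometry of the affine Weyl group $W$ acting on the level $(h+k)$ slice of $\hat{P}$, so that existence and uniqueness reduce to the fact that the open fundamental alcove is a strict fundamental domain for the action on regular points.

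First I would reinterpret the hypothesis $\mathcal{D}_{\hat{\lambda}}\neq 0$. By the product formula (\ref{6:qdimdef}), $\mathcal{D}_{\hat{\lambda}}=0$ precisely when $\sin\frac{\pi(\lambda+\rho|\alpha)}{h+k}=0$ for some $\alpha\in\Phi^{+}$, that is when $(\lambda+\rho|\alpha)\in(h+k)\mathbb{Z}$ for some positive root. Setting $\mu=\hat{\lambda}+\hat{\rho}$, a weight of level $h+k$, the condition $\mathcal{D}_{\hat{\lambda}}\neq 0$ says exactly that $\mu$ lies off every affine reflection hyperplane $H_{\alpha,n}=\{x:(x|\alpha)=n(h+k)\}$ with $\alpha\in\Phi$ and $n\in\mathbb{Z}$; in other words, $\mu$ is regular. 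The reflections in these hyperplanes are precisely $s_0,\ldots,s_r$ and their conjugates, so they generate $W$, and the shifted action $w\cdot\hat{\lambda}=w(\hat{\lambda}+\hat{\rho})-\hat{\rho}$ is by construction the ordinary $W$-action on $\mu$ transported back by the shift $-\hat{\rho}$.

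Next I would record the dictionary between the combinatorial and geometric pictures. The connected components of the complement of the hyperplanes $H_{\alpha,n}$ are the alcoves, $W$ acts simply transitively on them, and the closure of the fundamental alcove $A_0=\{x:(x|\alpha_i)\geq 0\ (1\le i\le r),\ (x|\theta)\le h+k\}$ is a strict fundamental domain, so every $W$-orbit meets $\overline{A_0}$ in exactly one point. The key identification is that, under $\hat{\lambda}\mapsto\hat{\lambda}+\hat{\rho}$, the set $\hat{P}_{+}^{k}$ corresponds exactly to the regular integral weights in the interior of $A_0$: for $\hat{\lambda}=\sum_{i=0}^{r}\lambda_i\hat{\omega}_i\in\hat{P}_{+}^{k}$ the shifted weight $\mu=\sum_{i=0}^{r}(\lambda_i+1)\hat{\omega}_i$ has all coordinates strictly positive and level $h+k$, which translates into the strict alcove inequalities $(\mu|\alpha_i)>0$ for $1\le i\le r$ and $(\mu|\theta)=(\lambda|\theta)+(h-1)\le k+h-1<h+k$. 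With this in place both halves follow from simple transitivity. For existence, the regular point $\mu$ lies in the interior of a unique alcove $A'$, and the unique $w\in W$ with $w(A')=A_0$ carries $\mu$ to a regular integral point $w(\mu)$ in the interior of $A_0$; since $W$ preserves $\hat{P}$ and the level, $w(\mu)=\hat{\lambda}'+\hat{\rho}$ for some $\hat{\lambda}'\in\hat{P}_{+}^{k}$, and then $w\cdot\hat{\lambda}=\hat{\lambda}'$. For uniqueness, if $\hat{\lambda}',\hat{\lambda}''\in\hat{P}_{+}^{k}$ lie in a single shifted orbit, then $\hat{\lambda}'+\hat{\rho}$ and $\hat{\lambda}''+\hat{\rho}$ are two points of one $W$-orbit inside the interior of $A_0$; since $\overline{A_0}$ meets each orbit only once, they coincide, whence $\hat{\lambda}'=\hat{\lambda}''$.

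The routine part is the product-formula vanishing criterion. The part needing care is the geometric dictionary of the previous paragraph: verifying that the $s_i$ generate precisely the reflection group of the hyperplanes $H_{\alpha,n}$ at level $h+k$ (i.e.\ that the induced translation lattice is $(h+k)Q^{\vee}$), and that $\hat{P}_{+}^{k}$ matches the interior integral points of $A_0$. I expect this bookkeeping, rather than any deep idea, to be the main obstacle; once the dictionary is fixed, the theorem is the standard fact that $W$ acts simply transitively on alcoves.
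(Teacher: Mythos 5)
Your proof is correct, and it is the standard argument: the paper in fact states Theorem \ref{alcoverep} without proof, as one of the ``standard facts on quantum dimensions'' for which it refers to \cite[Section 16.3]{philippe1997conformal}, and the argument given there is precisely your reduction --- regularity of $\hat{\lambda}+\hat{\rho}$ at level $h+k$, simple transitivity of the affine Weyl group on alcoves, and the identification of $\hat{P}_{+}^{k}+\hat{\rho}$ with the integral points in the interior of the fundamental alcove. The bookkeeping you flag (the translation lattice being $(h+k)Q^{\vee}$ and the alcove dictionary) is exactly the content of that reference, so nothing is missing.
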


We now look at the role of the symmetry of the extended Dynkin diagram.
\begin{theorem} \label{affinesym}
If $\hat{\lambda}_1$ and $\hat{\lambda}_2\in \hat{P}^{k}$ are conjugate by an automorphism of the extended Dynkin diagram, then $\mathcal{D}_{\hat{\lambda}_1}=\mathcal{D}_{\hat{\lambda}_2}$.
\end{theorem}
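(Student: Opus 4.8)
The plan is to exploit the fact that an automorphism of the extended Dynkin diagram is not just a combinatorial symmetry but is compatible with the whole affine Weyl group structure. Write $\Omega$ for the automorphism group, realized linearly on $\hat{P}$ by $\sigma(\hat{\omega}_i)=\hat{\omega}_{\sigma(i)}$, and set $N=h+k$. The starting observation is that $\sigma$ permutes the affine simple reflections, $\sigma s_i\sigma^{-1}=s_{\sigma(i)}$, since it preserves the affine Cartan matrix. Consequently $\sigma$ normalizes $W$, preserves length ($\ell(\sigma w\sigma^{-1})=\ell(w)$), fixes $\hat{\rho}=\sum_i\hat{\omega}_i$ and the marks ($a_{\sigma(i)}=a_i$), hence preserves $\hat{P}^k$ and $\hat{P}_+^k$; and a one-line computation using $\sigma\hat{\rho}=\hat{\rho}$ shows it intertwines the shifted action, $\sigma(w\cdot\hat{\lambda})=(\sigma w\sigma^{-1})\cdot(\sigma\hat{\lambda})$.

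These formal properties dispose of everything except one residual case. If $\mathcal{D}_{\hat{\lambda}_1}=0$, Theorem \ref{6:Sfixed0} gives $w\in W$ of odd signature with $w\cdot\hat{\lambda}_1=\hat{\lambda}_1$; the intertwining relation then yields $(\sigma w\sigma^{-1})\cdot\hat{\lambda}_2=\hat{\lambda}_2$ with $\sigma w\sigma^{-1}$ again of odd signature, so $\mathcal{D}_{\hat{\lambda}_2}=0$. If $\mathcal{D}_{\hat{\lambda}_1}\neq0$, Theorem \ref{alcoverep} gives $w\in W$ with $\hat{\lambda}_1':=w\cdot\hat{\lambda}_1\in\hat{P}_+^k$; then $\sigma\hat{\lambda}_1'=(\sigma w\sigma^{-1})\cdot\hat{\lambda}_2$ is the dominant representative of $\hat{\lambda}_2$, and two applications of Theorem \ref{6:Sfixed0} (with $\ell(\sigma w\sigma^{-1})=\ell(w)$) reduce the desired equality $\mathcal{D}_{\hat{\lambda}_1}=\mathcal{D}_{\hat{\lambda}_2}$ to the single assertion that $\mathcal{D}_{\sigma\hat{\mu}}=\mathcal{D}_{\hat{\mu}}$ for every dominant $\hat{\mu}\in\hat{P}_+^k$. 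Since $\Omega$ preserves dominance, this dominant assertion is closed under composition, so it suffices to check it on generators of $\Omega$.

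I would split the generators in two. Those fixing node $0$ are exactly the finite Dynkin diagram automorphisms; such a $\sigma$ is an isometry of $\mathfrak{h}^*$ fixing $\rho$ and permuting $\Phi^+$, so the substitution $\alpha\mapsto\sigma^{-1}\alpha$ leaves the product (\ref{6:qdimdef}) unchanged. The remaining generators are the rotations with $s:=\sigma(0)\neq0$; for $D_r$ these are the special nodes $1,r-1,r$, and the key point is that each $\omega_s$ is minuscule, i.e. $(\omega_s|\alpha)\in\{0,1\}$ for all $\alpha>0$. Under the standard identification of the extended affine Weyl group $W\rtimes\Omega$ with $P\rtimes W_{\mathrm{f}}$ (the finite Weyl group $W_{\mathrm{f}}$, with $P$ as translation lattice via the invariant form), such a $\sigma$ is represented by $t_{\omega_s}u_s$ for a unique $u_s\in W_{\mathrm{f}}$, and the induced action at level $N$ on $\nu:=\mu+\rho$ is $\nu\mapsto u_s(\nu)+N\omega_s$; here $\ell(u_s)=|\Phi^+|-|\Phi^+_s|$ with $\Phi^+_s=\{\alpha>0:(\omega_s|\alpha)=0\}$. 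Writing $P(\nu)=\prod_{\alpha>0}\sin\frac{\pi(\nu|\alpha)}{N}$ for the numerator of (\ref{6:qdimdef}), the identity $\sin(x+\pi(\omega_s|\alpha))=(-1)^{(\omega_s|\alpha)}\sin x$ together with the elementary finite anti-invariance $P(u_s\nu)=(-1)^{\ell(u_s)}P(\nu)$ gives
\begin{equation*}
P\bigl(u_s(\nu)+N\omega_s\bigr)=(-1)^{n_s+\ell(u_s)}P(\nu),\qquad n_s:=\sum_{\alpha>0}(\omega_s|\alpha)=\#\{\alpha>0:(\omega_s|\alpha)=1\},
\end{equation*}
the last equality using minusculeness. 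Since $(\omega_s|\alpha)$ equals the $\alpha_s$-coefficient of $\alpha$, the set counted by $n_s$ is precisely $\{\alpha>0\}\setminus\Phi^+_s$, so $n_s=|\Phi^+|-|\Phi^+_s|=\ell(u_s)$ and the total sign is $(-1)^{2n_s}=+1$. As the denominator of (\ref{6:qdimdef}) is $\sigma$-independent, this gives $\mathcal{D}_{\sigma\hat{\mu}}=\mathcal{D}_{\hat{\mu}}$, completing the reduction.

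The main obstacle is the third step: pinning down the precise affine-linear realization $\nu\mapsto u_s(\nu)+N\omega_s$ of each rotation, with the correct normalization of the translation lattice, and identifying $u_s$ as the distinguished coset representative so that its length is exactly $n_s$. Everything else is either formal (the length-preserving intertwining of the first two paragraphs) or an elementary consequence of the minuscule property; the delicate part is getting the translation normalization and the identity $\ell(u_s)=n_s$ right \emph{on the nose}, rather than merely up to an undetermined sign, since it is exactly the cancellation of these two signs that forces $\mathcal{D}_{\sigma\hat{\mu}}=\mathcal{D}_{\hat{\mu}}$.
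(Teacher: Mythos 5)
Your argument is correct in substance, but there is nothing in the paper to compare it against line by line: the paper does not prove Theorem \ref{affinesym}. Section \ref{sec:qdim} explicitly collects ``standard facts on quantum dimensions without proofs'' and points to \cite[Section 16.3]{philippe1997conformal}. In that reference the statement is obtained from modular data: an outer automorphism multiplies the modular $S$-matrix entries $S_{\hat{\lambda}\hat{\mu}}$ by a root-of-unity phase depending only on the finite part of $\hat{\mu}$, and since the quantum dimension is the ratio $S_{\hat{\lambda},k\hat{\omega}_0}/S_{k\hat{\omega}_0,k\hat{\omega}_0}$ and the vacuum weight $k\hat{\omega}_0$ has zero finite part, the phase drops out. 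Your route is genuinely different and more self-contained: you work directly with the sine product (\ref{6:qdimdef}). The formal reduction (that $\sigma$ permutes the $s_i$, normalizes $W$, preserves length, fixes $\hat{\rho}$, and intertwines the shifted action, so everything reduces to dominant weights and then to generators) is clean; node-$0$-fixing automorphisms are handled by a permutation of $\Phi^+$ fixing $\rho$; and for the classes moving node $0$ you invoke the Iwahori--Matsumoto/Bourbaki realization $\sigma=t_{\omega_s}u_s$ inside $W_{\mathrm{f}}\ltimes t_P$, with $u_s$ the product of the longest elements of the parabolic subgroup fixing $\omega_s$ and of $W_{\mathrm{f}}$, so that $\ell(u_s)=|\Phi^+|-|\Phi^+_s|=n_s$ and the two signs $(-1)^{n_s}$ (minusculeness of $\omega_s$, $s\in\{1,r-1,r\}$) and $(-1)^{\ell(u_s)}$ (anti-invariance of the numerator under $W_{\mathrm{f}}$) cancel exactly. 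What your approach buys is independence from CFT/modular machinery, at the cost of importing the structure theory of the extended affine Weyl group, which you rightly flag as the step where the normalization must be exact; what the cited route buys is brevity, once the Kac--Peterson $S$-matrix formalism is assumed.

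One blemish to repair: in the zero case you quote Theorem \ref{6:Sfixed0} in the converse direction, deducing from $\mathcal{D}_{\hat{\lambda}_1}=0$ the existence of an odd-signature $w$ with $w\cdot\hat{\lambda}_1=\hat{\lambda}_1$; the theorem as stated only gives the implication the other way. The cheapest fix avoids the converse entirely: it suffices to prove $\mathcal{D}_{\hat{\lambda}_1}\neq 0\Rightarrow\mathcal{D}_{\hat{\lambda}_1}=\mathcal{D}_{\hat{\lambda}_2}$, for if $\mathcal{D}_{\hat{\lambda}_1}=0$ but $\mathcal{D}_{\hat{\lambda}_2}\neq 0$, applying that implication to $\sigma^{-1}$ (which conjugates $\hat{\lambda}_2$ to $\hat{\lambda}_1$) forces $\mathcal{D}_{\hat{\lambda}_1}=\mathcal{D}_{\hat{\lambda}_2}\neq 0$, a contradiction. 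Alternatively, the converse is true --- if no reflection fixes $\hat{\lambda}+\hat{\rho}$, then $\hat{\lambda}$ is conjugate under the shifted action to a dominant weight, whose quantum dimension is positive by Theorem \ref{6:qdimpos} --- but that is an extra argument you would have to supply rather than a citation of Theorem \ref{6:Sfixed0}.
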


\begin{corollary} \label{kpluszero}
Let $\omega_i$ be a fundamental weight such that $\hat{\omega}_{i}$ is conjugate to $\hat{\omega}_{0}$ by an automorphism of the extended Dynkin diagram. If $\hat{\lambda}=k\hat{\omega}_i \in \hat{P}^{k}$, then $\mathcal{D}_{\hat{\lambda}}=1$. 
\end{corollary}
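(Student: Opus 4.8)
The plan is to reduce the assertion to the single base case $\hat{\lambda}=k\hat{\omega}_0$ and then transport the answer along the extended Dynkin diagram symmetry recorded in Theorem \ref{affinesym}. First I would dispose of the base case by a direct computation. The weight $k\hat{\omega}_0$ lies in $\hat{P}^k$ because its only nonzero coordinate is $\lambda_0=k$ and $a_0=1$, so $\sum_{i=0}^{r}a_i\lambda_i=k$. Its classical part, obtained by discarding the $\hat{\omega}_0$-component, is the zero weight $\lambda=0$; indeed $k\hat{\omega}_0$ is precisely the level $k$ affinization of $0\in P$. Hence by the definition (\ref{6:qdimdef}) of the quantum dimension we get $\mathcal{D}_{k\hat{\omega}_0}=\chi_0\!\left(\tfrac{\rho}{h+k}\right)=1$, since $\chi_0$ is the character of the trivial representation, the constant function $1$; equivalently, the numerator and denominator of the product formula coincide when $\lambda=0$.

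Next I would carry this value to the node $i$. By hypothesis there is an automorphism $\sigma$ of the extended Dynkin diagram with $\sigma(\hat{\omega}_0)=\hat{\omega}_i$. Since $\sigma$ acts linearly on $\hat{P}$, it satisfies $\sigma(k\hat{\omega}_0)=k\hat{\omega}_i$, so the two weights $k\hat{\omega}_0$ and $k\hat{\omega}_i$ are conjugate under $\sigma$. Because $\sigma$ permutes the nodes while preserving their marks, it maps $\hat{P}^k$ to itself, which is consistent with the standing assumption $k\hat{\omega}_i\in\hat{P}^k$ and forces $a_i=a_0=1$. Theorem \ref{affinesym} then yields $\mathcal{D}_{k\hat{\omega}_i}=\mathcal{D}_{k\hat{\omega}_0}=1$, which is the desired conclusion.

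There is no genuine difficulty here once Theorem \ref{affinesym} is available; the only points needing care are bookkeeping ones. One must check that the level-preserving property of $\sigma$ is exactly what guarantees $k\hat{\omega}_i\in\hat{P}^k$, and that the linear extension of the combinatorial diagram automorphism to $\hat{P}$ commutes with scalar multiplication, so that $\sigma$ sends the base weight $k\hat{\omega}_0$ to the intended target $k\hat{\omega}_i$ and not to some other level $k$ weight. For $X=D_r$ these symmetries are completely explicit: the marks equal to $1$ occur precisely at the four outer nodes $0,1,r-1,r$ (see Figure \ref{pic:extD}), and the automorphisms of the extended diagram act transitively on this set, so the corollary applies exactly to the fundamental weights $\omega_1,\omega_{r-1},\omega_r$.
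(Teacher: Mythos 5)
Your proof is correct and follows exactly the route the paper intends: the corollary is stated immediately after Theorem \ref{affinesym} precisely because it reduces to the observation that $k\hat{\omega}_0$ is the level $k$ affinization of the zero weight, so $\mathcal{D}_{k\hat{\omega}_0}=\chi_0\left(\frac{\rho}{h+k}\right)=1$, and conjugation by the diagram automorphism transports this value to $k\hat{\omega}_i$. Your additional bookkeeping remarks (level preservation, $a_i=1$, and the explicit nodes $1,r-1,r$ for $D_r$) are accurate and consistent with how the corollary is used in Proposition \ref{tipsprop}.
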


\section{Preliminary computations}\label{sec:comp}
From now on we will assume that $X=D_r$ and $$z^{(a)}_{m}=Q^{(a)}_{m}(\frac{\rho}{h+k})= \sum_{\omega\in \Omega^{(a)}_{m}} \mathcal{D}_{\hat{\omega}}$$
where $\Omega^{(a)}_{m}$ denotes the set of elements appearing in the sum (\ref{6:qdecompD}).

\begin{proposition}\label{tipsprop}
Let $a\in \{1,r-1,r\}$. The following properties hold :
\begin{align}
&z^{(a)}_{m}>0 \quad \text{for } 0\le m \le k, \label{1pos}\\
&z^{(a)}_m =z^{(a)}_{k-m} \quad \text{for }\, 0 < m < k ,\label{1sym}\\
&z^{(a)}_{k}=1. \label{1unit}
\end{align}
\end{proposition}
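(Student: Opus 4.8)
The plan is to exploit the fact that for $a\in\{1,r-1,r\}$ the decomposition (\ref{6:qdecompD}) collapses to a single character, so that each of the three assertions reduces to a standard fact about a single quantum dimension. Indeed, for $a=1$ the first line of (\ref{6:qdecompD}) applies and the constraint $k_1=m$ leaves the single summand $\chi_{m\omega_1}$, while for $a=r-1,r$ the third line gives $Q^{(a)}_m=\chi_{m\omega_a}$ outright; in every case $z^{(a)}_m=\mathcal{D}_{\widehat{m\omega_a}}$. The key structural observation is that $1,r-1,r$ are precisely the tips of the extended diagram $D_r^{(1)}$ carrying mark $1$ (see Figure \ref{pic:extD}), so the coefficient of $\hat\omega_0$ in the level $k$ affinization is forced by $a_am+a_0\lambda_0=k$ to be $\lambda_0=k-m$; that is, $\widehat{m\omega_a}=m\hat\omega_a+(k-m)\hat\omega_0$.

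With this in hand, positivity (\ref{1pos}) is immediate: the mark-weighted coefficient sum of $m\omega_a$ is $a_am=m\le k$, so Theorem \ref{6:qdimpos} gives $\mathcal{D}_{\widehat{m\omega_a}}>0$ for $0\le m\le k$. For the unit boundary condition (\ref{1unit}) I would set $m=k$, so that $\widehat{k\omega_a}=k\hat\omega_a$, and then invoke Corollary \ref{kpluszero}: since each of the nodes $1,r-1,r$ is conjugate to node $0$ under an automorphism of the extended Dynkin diagram (the swap of the two left prongs for $a=1$, a left--right flip for $a=r-1,r$), we obtain $\mathcal{D}_{k\hat\omega_a}=1$, hence $z^{(a)}_k=1$.

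For the symmetry (\ref{1sym}) the idea is to produce, for each $a\in\{1,r-1,r\}$, an automorphism $\sigma_a$ of $D_r^{(1)}$ that interchanges nodes $0$ and $a$. Because $\widehat{m\omega_a}=m\hat\omega_a+(k-m)\hat\omega_0$ is supported only on nodes $0$ and $a$, such a $\sigma_a$ sends it to $m\hat\omega_0+(k-m)\hat\omega_a=\widehat{(k-m)\omega_a}$ irrespective of its action on the remaining nodes, and Theorem \ref{affinesym} then yields $z^{(a)}_m=\mathcal{D}_{\widehat{m\omega_a}}=\mathcal{D}_{\widehat{(k-m)\omega_a}}=z^{(a)}_{k-m}$. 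Concretely, for $a=1$ I take $\sigma_1$ to be the transposition $0\leftrightarrow1$ of the two left prongs; for $a=r-1$ the flip reversing the chain with $0\leftrightarrow r-1$ and $1\leftrightarrow r$; and for $a=r$ that flip composed with both prong-swaps, giving $0\leftrightarrow r$ and $1\leftrightarrow r-1$.

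The only genuinely non-formal step is this last one: verifying that the automorphism group of $D_r^{(1)}$ really contains an involution exchanging $0$ with each of $1,r-1,r$. This is a short inspection of Figure \ref{pic:extD}, with the small ranks $r=3,4$ (where the diagram carries extra symmetry) treated directly. I do not expect a serious obstacle here, since the whole point is that the ``tip'' indices $1,r-1,r$ are exactly the ones for which $Q^{(a)}_m$ is a single character; everything then follows mechanically from Theorem \ref{6:qdimpos}, Theorem \ref{affinesym}, and Corollary \ref{kpluszero}. The substantive difficulties are deferred to the remaining indices $2\le a\le r-2$, where $Q^{(a)}_m$ is a genuine sum.
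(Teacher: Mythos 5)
Your proposal is correct and follows essentially the same route as the paper: identify $z^{(a)}_m=\mathcal{D}_{(k-m)\hat\omega_0+m\hat\omega_a}$, then get positivity from Theorem \ref{6:qdimpos}, symmetry from the extended-diagram automorphism swapping nodes $0$ and $a$ via Theorem \ref{affinesym}, and the unit boundary condition from Corollary \ref{kpluszero}. The paper's proof is just a terser version of the same argument, leaving the existence of the relevant diagram automorphisms implicit.
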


\begin{proof}
Note that $z^{(a)}_{m}=\mathcal{D}_{(k-m)\hat{\omega}_0+m\hat{\omega}_a}$.
The inequality (\ref{1pos}) follows from Theorem \ref{6:qdimpos}. By Theorem \ref{affinesym}, we have  $\mathcal{D}_{(k-m)\hat{\omega}_0+m\hat{\omega}_a}=\mathcal{D}_{m\hat{\omega}_0+(k-m)\hat{\omega}_a}$ and it implies (\ref{1sym}). (\ref{1unit}) is a consequence of Corollary \ref{kpluszero}.
\end{proof}

\begin{proposition}\label{1prop}
The following properties hold :
\begin{align}
&z^{(1)}_{k+j}=0 \qquad \text{for }\, 1 \le j \le h -1 \label{1zer0},\\
&z^{(1)}_{k+h}=1. \label{1reunit}
\end{align}
\end{proposition}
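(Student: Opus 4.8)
The plan is to evaluate $z^{(1)}_{k+j}$ directly through the sine product formula (\ref{6:qdimdef}). Since $Q^{(1)}_m=\chi_{m\omega_1}$ by (\ref{6:qdecompD}), for every $m\ge 0$ we have $z^{(1)}_m=\chi_{m\omega_1}(\frac{\rho}{h+k})=\mathcal{D}_{\widehat{m\omega_1}}$, where $\widehat{m\omega_1}=(k-m)\hat{\omega}_0+m\hat{\omega}_1$ is the level $k$ affinization (with a negative $\hat{\omega}_0$-label when $m>k$, which (\ref{6:qdimdef}) still accommodates since it only requires $m\omega_1\in P$). First I would reduce the arguments of the sines. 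Using $(\omega_1|\alpha_i)=\delta_{1i}$ and $(\rho|\alpha)=\operatorname{ht}\alpha$ in the simply-laced normalization, one gets for each $\alpha>0$
$$(m\omega_1+\rho\mid\alpha)=m\,c_1(\alpha)+\operatorname{ht}\alpha,$$
where $c_1(\alpha)$ is the coefficient of $\alpha_1$ in $\alpha$. Because node $1$ is an endpoint with mark $a_1=1$, we have $c_1(\alpha)\in\{0,1\}$ for every $\alpha\in\Phi^{+}$, so the whole computation collapses to a single linear condition on heights.

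The key combinatorial step I would isolate as a lemma: the multiset $\{\operatorname{ht}\alpha \mid \alpha\in\Phi^{+},\ c_1(\alpha)=1\}$ equals $\{1,2,\dots,h-1\}$. Realizing $\Phi^{+}$ as $\{e_i-e_j,\,e_i+e_j : i<j\}$ with $\omega_1=e_1$, the roots with $c_1=1$ are exactly $e_1\pm e_j$ for $2\le j\le r$; a direct expansion in simple roots gives $\operatorname{ht}(e_1-e_j)=j-1$ and $\operatorname{ht}(e_1+e_j)=2r-1-j$, so these heights run through $\{1,\dots,r-1\}$ and $\{r-1,\dots,2r-3\}$ respectively, together covering $\{1,\dots,h-1\}$ (with $h=2r-2$ and the value $r-1$ attained twice). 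This enumeration is the only place where the $D_r$ root data is genuinely used, and verifying the two height formulas at the forked end of the diagram is the main, though routine, obstacle.

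With this in hand both assertions follow. For (\ref{1zer0}), take $m=k+j$ with $1\le j\le h-1$ and choose $\alpha\in\Phi^{+}$ with $c_1(\alpha)=1$ and $\operatorname{ht}\alpha=h-j$ (possible by the lemma). Then $(m\omega_1+\rho\mid\alpha)=(k+j)+(h-j)=h+k$, so the corresponding numerator factor in (\ref{6:qdimdef}) is $\sin\pi=0$, whence $z^{(1)}_{k+j}=0$. Equivalently, the affinized weight then lies on an affine wall, the reflection in that wall is an odd element of $W$ fixing it under the shifted action, and Theorem \ref{6:Sfixed0} forces the vanishing. For (\ref{1reunit}), take $m=k+h$. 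Every factor with $c_1(\alpha)=0$ agrees with the corresponding denominator factor, while each of the $h$ factors with $c_1(\alpha)=1$ satisfies $\sin\frac{\pi(k+h+\operatorname{ht}\alpha)}{h+k}=-\sin\frac{\pi\operatorname{ht}\alpha}{h+k}$; since the denominator is nonzero, the ratio is $(-1)^{h}$. As $h=2r-2$ is even, $z^{(1)}_{k+h}=1$, as claimed.
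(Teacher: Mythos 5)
Your proof is correct and follows essentially the same route as the paper: both evaluate $z^{(1)}_m=\mathcal{D}_{(k-m)\hat\omega_0+m\hat\omega_1}$ via the sine product formula (\ref{6:qdimdef}), reduce to the $h=2r-2$ positive roots with $\alpha-\alpha_1\ge 0$ whose heights cover $\{1,\dots,h-1\}$, and then read off a vanishing factor for $k+1\le m\le k+h-1$ and a product of $h$ factors of $-1$ at $m=k+h$. Your explicit enumeration of those roots as $e_1\pm e_j$ with heights $j-1$ and $2r-1-j$ supplies a detail the paper only asserts; just note that the collection of heights is a multiset of size $h$ (with $r-1$ occurring twice) rather than literally the set $\{1,\dots,h-1\}$, as your parenthetical already indicates.
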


\begin{proof}
To prove (\ref{1zer0}), we use the product formula (\ref{6:qdimdef}) for the quantum dimension. First note that for each integer $l$ such that $1 \le l \le h-1=2r-1$, there exists a positive root $\alpha$ such that $\operatorname{ht} \alpha=l$ and $\alpha-\alpha_1 \ge 0$. Moreover, the number of such roots is exactly $h=2r-2$. In the product (\ref{6:qdimdef}) for $\mathcal{D}_{(k-m)\hat{\omega}_0+m\hat{\omega}_1}$, only those roots may contribute in a non-trivial way as
$$z^{(1)}_{m}=\mathcal{D}_{(k-m)\hat{\omega}_0+m\hat{\omega}_1}=\prod_{\substack{\alpha>0 \\ \alpha-\alpha_1\ge 0}}\frac{\sin\frac{\pi (\operatorname{ht}\alpha+m)}{h+k}}{\sin\frac{\pi \operatorname{ht}\alpha}{h+k}}.$$
Since $\{\operatorname{ht} \alpha+m| \alpha>0, \alpha-\alpha_1\ge 0\}$ is the same set as $\{n\in \mathbb{Z}| 1+m \leq n \leq (h-1)+m\}$, one can find a positive root $\alpha$ such that $\operatorname{ht} \alpha+m=h+k$ when $k+1 \le m \le k+(h-1)$. This proves (\ref{1zer0}).

We now turn to (\ref{1reunit}). If $m=h+k$, then
$$
\mathcal{D}_{(k-m)\hat{\omega}_0+m\hat{\omega}_1}=\prod_{\substack{\alpha>0 \\ \alpha-\alpha_1\ge 0}}\frac{\sin\frac{\pi(\operatorname{ht}\alpha+h+k)}{h+k}}{\sin\frac{\pi \operatorname{ht}\alpha}{h+k}}=\prod_{\substack{\alpha>0 \\ \alpha-\alpha_1\ge 0}}\frac{-\sin\frac{\pi \operatorname{ht}\alpha}{h+k}}{\sin\frac{\pi \operatorname{ht}\alpha}{h+k}}.
$$ Since this product is over $h=2r-2$ terms, the final product equals 1 and it proves $z^{(1)}_{k+h}=1$. 
\end{proof}

In the rest of the section, we will prove that for $2\le a \le r-2$, $z^{(a)}_{s}=z^{(a)}_{s+1}$ when $k$ is odd and $z^{(a)}_{s-1}=z^{(a)}_{s+1}$ when $k$ is even where $s=\lfloor\frac{k}{2}\rfloor$. If $2\le a \le r-2$, we will denote the element $k_a\hat{\omega}_{a}+k_{a-2}\hat{\omega}_{a-2}+\cdots + k_{2} \hat{\omega}_{2}+k_{0} \hat{\omega}_{0}\in \hat{P}^k$ by $(k_a,k_{a-2},\cdots, k_2, k_{0})$ when $a$ is even and $k_{a}\hat{\omega}_{a}+k_{a-2}\hat{\omega}_{a-2}+\cdots + k_{1} \hat{\omega}_{1} + k_{0} \hat{\omega}_{0}\in \hat{P}^k$ by $(k_a,k_{a-2},\cdots, k_1, k_{0})$ when $a$ is odd. Then we can write $$z^{(a)}_{m}=\sum_{\hat{\omega}\in \hat{\Omega}^{(a)}_{m}} \mathcal{D}_{\hat{\omega}}$$ where
\begin{equation}
\hat{\Omega}^{(a)}_{m}= \left\{(k_a,k_{a-2},\cdots, k_2, k_{0})\in \hat{P}^k\mid
\begin{array}{ll}
k_a+k_{a-2}+\cdots+k_2 \le m\\
k_a,k_{a-2},\cdots, k_2 \in \mathbb{Z}_{\geq 0} \\
\end{array}
\right\}
\end{equation}
for even $a$ such that $2\le a \le r-2$ and 
\begin{equation}
\hat{\Omega}^{(a)}_{m}= \left\{(k_a,k_{a-2},\cdots, k_1, k_{0})\in \hat{P}^k\mid 
\begin{array}{ll}
k_a+k_{a-2}+\cdots+k_1=m\\
k_a,k_{a-2},\cdots,k_1 \in \mathbb{Z}_{\geq 0} \\
\end{array}
\right\}
\end{equation}
for odd $a$ such that $2\le a \le r-2$.

Assume that the level $k$ is odd and $s=\frac{k-1}{2}$. 
\begin{prop}\label{evence}
If $a$ is even and $2\le a \le r-2$, then $z^{(a)}_{s}=z^{(a)}_{s+1}$.
\end{prop}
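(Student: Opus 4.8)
The plan is to compare the index sets $\hat{\Omega}^{(a)}_{s}$ and $\hat{\Omega}^{(a)}_{s+1}$ directly and to show that every \emph{extra} $q$-dimension that appears at level $s+1$ vanishes on the nose, giving $z^{(a)}_{s+1}-z^{(a)}_{s}=0$. First I would note that $\hat{\Omega}^{(a)}_{s}\subseteq\hat{\Omega}^{(a)}_{s+1}$, since the only constraint separating the two sets is the inequality on the sum of the positive-index coordinates. Writing $\sigma=k_a+k_{a-2}+\cdots+k_2$, this yields
$$z^{(a)}_{s+1}-z^{(a)}_{s}=\sum_{\substack{(k_a,\dots,k_2,k_0)\in\hat{P}^{k} \\ \sigma=s+1,\ k_a,\dots,k_2\ge 0}}\mathcal{D}_{(k_a,\dots,k_2,k_0)},$$
so it suffices to prove that each summand with $\sigma=s+1$ has zero $q$-dimension.

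The next step is to pin down the $\hat{\omega}_0$-coordinate $k_0$ of such a weight. Since $2\le a\le r-2$, the relevant marks are $a_0=1$ and $a_2=a_4=\cdots=a_a=2$, so the defining condition $\sum_i a_i\lambda_i=k$ of $\hat{P}^{k}$ reads $k_0+2\sigma=k$, i.e.\ $k_0=k-2\sigma$. With $k$ odd and $s=\frac{k-1}{2}$, the boundary terms with $\sigma=s+1$ therefore satisfy $k_0=k-2(s+1)=-1$.

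The heart of the argument is the observation that a weight whose $\hat{\omega}_0$-coordinate equals $-1$ is fixed by $s_0$ under the shifted action, which forces its $q$-dimension to vanish. Concretely, if $\hat{\omega}=\sum_i k_i\hat{\omega}_i$ has $k_0=-1$, then $\hat{\omega}+\hat{\rho}$ has vanishing $\hat{\omega}_0$-coefficient, hence is fixed by the fundamental reflection $s_0$ (which acts by $s_0\hat{\omega}_j=\hat{\omega}_j-\delta_{0j}\hat{\alpha}_0$); consequently $s_0\cdot\hat{\omega}=s_0(\hat{\omega}+\hat{\rho})-\hat{\rho}=\hat{\omega}$. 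As $s_0$ has odd signature, Theorem \ref{6:Sfixed0} gives $\mathcal{D}_{\hat{\omega}}=0$. Applying this to every term with $\sigma=s+1$ makes the sum above vanish, so $z^{(a)}_{s+1}=z^{(a)}_{s}$.

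I expect there to be no genuine obstacle, only the bookkeeping needed to identify the $\hat{\omega}_0$-coordinate of the affinization with $k-2\sigma$ and to confirm that the oddness of $k$ is exactly what sends the boundary value $\sigma=s+1$ to $k_0=-1$. It is precisely this coincidence that lets the single reflection $s_0$ do all the work termwise; for other parities or for the comparison of non-adjacent indices one should expect to need a genuine cancellation between distinct weights rather than the termwise vanishing exploited here.
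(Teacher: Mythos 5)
Your proposal is correct and follows essentially the same route as the paper: identify the extra terms in $\hat{\Omega}^{(a)}_{s+1}\setminus\hat{\Omega}^{(a)}_{s}$, compute $k_0=-1$ from $k_0+2(s+1)=k=2s+1$, and kill each such term via the shifted $s_0$-fixed-point argument and Theorem \ref{6:Sfixed0}. You even supply the small verification that $k_0=-1$ forces $s_0\cdot\hat{\omega}=\hat{\omega}$, which the paper leaves implicit.
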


\begin{proof}
Note that $\hat{\Omega}^{(a)}_{s}\subseteq \hat{\Omega}^{(a)}_{s+1}$ and 
$$
\hat{\Omega}^{(a)}_{s+1} \setminus \hat{\Omega}^{(a)}_{s}= \left\{(k_a,k_{a-2},\cdots, k_2, k_{0})\in \hat{P}^k\mid
\begin{array}{ll}
k_a+k_{a-2}+\cdots+k_2 = s+1 \\
k_a,k_{a-2},\cdots, k_2 \in \mathbb{Z}_{\geq 0} \\
\end{array}
\right\}.
$$
If $\hat{\omega}=(k_a,k_{a-2},\cdots, k_2, k_{0})\in \hat{\Omega}^{(a)}_{s+1} \setminus \hat{\Omega}^{(a)}_{s}$, then $k_0=-1$ because $$k_0+2(k_a+k_{a-2}\cdots+k_2)=k=2s+1.$$ 
So for any $\hat{\omega}\in \hat{\Omega}^{(a)}_{s+1} \setminus \hat{\Omega}^{(a)}_{s}$, $\mathcal{D}_{\hat\omega}=0$ since $s_0\cdot \hat\omega=\hat\omega$.
 Thus $z^{(a)}_{s+1}=z^{(a)}_{s}$.
\end{proof}

\begin{prop}\label{oddce}
If  $a$ is odd and $2\le a \le r-2$, then $z^{(a)}_{s}=z^{(a)}_{s+1}$.
\end{prop}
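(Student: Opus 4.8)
The plan is to mirror the structure of Proposition~\ref{evence}, while coping with one essential new difficulty: for odd $a$ the defining constraint on $\hat\Omega^{(a)}_m$ is the \emph{equality} $k_a+k_{a-2}+\cdots+k_1=m$ rather than an inequality, so $\hat\Omega^{(a)}_{s}$ and $\hat\Omega^{(a)}_{s+1}$ are disjoint and the telescoping argument of the even case is unavailable. First I would pin down the node-$0$ coefficient. Writing $T=k_a+k_{a-2}+\cdots+k_3$, the marks of $D_r^{(1)}$ give $a_i=2$ for $3\le i\le r-2$ and $a_0=a_1=1$, so the level condition $\sum_i a_i\lambda_i=k$ reads $2T+k_1+k_0=k$. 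Combined with $T+k_1=m$ this yields $k_0=k-2m+k_1$. Hence for $m=s=\frac{k-1}{2}$ every element of $\hat\Omega^{(a)}_{s}$ has $k_0=k_1+1\ge 1$, so it is a genuine dominant weight with $\mathcal{D}>0$ by Theorem~\ref{6:qdimpos}, while for $m=s+1$ every element of $\hat\Omega^{(a)}_{s+1}$ has $k_0=k_1-1$.

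Next I would discard the non-dominant terms. In $\hat\Omega^{(a)}_{s+1}$ the elements with $k_1=0$ are exactly those with $k_0=-1$; for such $\hat\omega$ the coefficient of $\hat\omega+\hat\rho$ at node $0$ vanishes, so $s_0\cdot\hat\omega=\hat\omega$ and $\mathcal{D}_{\hat\omega}=0$ by Theorem~\ref{6:Sfixed0}, exactly as in the proof of Proposition~\ref{evence}. Thus $z^{(a)}_{s+1}$ reduces to the sum of $\mathcal{D}_{\hat\omega}$ over those $\hat\omega\in\hat\Omega^{(a)}_{s+1}$ with $k_1\ge 1$.

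The key step is to match the surviving terms with those of $z^{(a)}_{s}$ through the symmetry of the extended Dynkin diagram. Nodes $0$ and $1$ are the two tips of the left fork, both joined only to node $2$, so the transposition $\sigma$ exchanging nodes $0$ and $1$ and fixing every other node is an automorphism of $D_r^{(1)}$; since $a_0=a_1=1$ it preserves $\hat P^k$. On coordinates $\sigma$ simply swaps the node-$0$ and node-$1$ coefficients, so it sends the generic element $(k_a,\dots,k_3,k_1,k_1+1)$ of $\hat\Omega^{(a)}_{s}$ to $(k_a,\dots,k_3,k_1+1,k_1)$, which is precisely the generic element of $\hat\Omega^{(a)}_{s+1}$ with $k_1\ge 1$. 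This exhibits a bijection from $\hat\Omega^{(a)}_{s}$ onto the non-vanishing part of $\hat\Omega^{(a)}_{s+1}$, and Theorem~\ref{affinesym} guarantees $\mathcal{D}_{\sigma(\hat\omega)}=\mathcal{D}_{\hat\omega}$. Summing term by term then gives $z^{(a)}_{s+1}=z^{(a)}_{s}$.

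The main obstacle is conceptual rather than computational: because the two index sets carry the same total weight, one cannot cancel a difference set as in the even case, and the affine Weyl group alone only kills the $k_0=-1$ terms. The decisive extra ingredient is the fork automorphism exchanging nodes $0$ and $1$, which realises the shift $m\mapsto m+1$ as a q-dimension-preserving relabelling. I would take care to verify that $\sigma$ disturbs no other coefficient and that the $k_1=0$ elements are the \emph{only} ones lying outside its image, since the entire argument rests on that bookkeeping.
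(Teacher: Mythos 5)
Your proposal is correct and follows essentially the same route as the paper's proof: derive $k_0=k_1-1$ (resp.\ $k_0=k_1+1$) from the level condition, kill the $k_1=0$ terms via the $s_0$-fixed-point argument of Theorem~\ref{6:Sfixed0}, and match the survivors with $\hat{\Omega}^{(a)}_{s}$ by swapping the node-$0$ and node-$1$ coordinates, invoking Theorem~\ref{affinesym}. The only cosmetic differences are the direction of the bijection and your explicit naming of the underlying extended-diagram automorphism, which the paper leaves implicit.
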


\begin{proof}
Let $\hat{\omega}=(k_a,k_{a-2},\cdots, k_1, k_{0})\in \hat{\Omega}^{(a)}_{s+1}$. The condition 
\begin{equation}\label{2s1}
k_0+k_1+2(k_a+k_{a-2}+\cdots+k_3)=k_0+k_1+2(s+1-k_1)=k=2s+1
\end{equation}
implies $k_0=k_1-1$. If $k_1=0$, then $k_0=-1$ and $\mathcal{D}_{\hat\omega}=0$ since $s_0\cdot \hat\omega=\hat\omega$.

Let $$
(\hat{\Omega}^{(a)}_{s+1})'= \left\{(k_a,k_{a-2},\cdots, k_1, k_{0})\in \hat{P}^k\mid
\begin{array}{ll}
k_a+k_{a-2}+\cdots+k_1=s+1 \\
k_a,k_{a-2},\cdots,k_3 \in \mathbb{Z}_{\geq 0},k_1\ge 1 \\
\end{array}
\right\}
$$
and 

$$
\hat{\Omega}^{(a)}_{s}= \left\{(k_a,k_{a-2},\cdots, k_1, k_{0})\in \hat{P}^k\mid 
\begin{array}{ll}
k_a+k_{a-2}+\cdots+k_1=s\\
k_a,k_{a-2},\cdots,k_1 \in \mathbb{Z}_{\geq 0} \\
\end{array}
\right\}.
$$

Let us construct a bijection between $(\hat{\Omega}^{(a)}_{s+1})'$ and $\hat{\Omega}^{(a)}_{s}$. Define a map from $(\hat{\Omega}^{(a)}_{s+1})'$ to $\hat{\Omega}^{(a)}_{s}$ by
\begin{equation}\label{map01}
(k_a,k_{a-2},\cdots, k_1, k_{0}) \mapsto (k_a,k_{a-2},\cdots, k_0, k_1).
\end{equation}
To see that the map is well-defined, note that if $(k_a,k_{a-2},\cdots, k_1, k_{0})\in (\hat{\Omega}^{(a)}_{s+1})'\subset \hat{\Omega}^{(a)}_{s+1}$, then (\ref{2s1}) implies $k_0=k_1-1\ge 0$. Since 
$$
k_a+k_{a-2}+\cdots+k_3+ k_{0}=k_a+k_{a-2}+\cdots+k_3+ (k_{1}-1)=s,
$$
we have $(k_a,k_{a-2},\cdots, k_0, k_1)\in \hat{\Omega}^{(a)}_{s}$. The map (\ref{map01}) is injective since $k_0=k_1-1$. 

Conversely, any element $(k_a,k_{a-2},\cdots, k_1, k_{0})\in \hat{\Omega}^{(a)}_{s}$ satisfies $$k_0+k_1+2(s-k_1)=k_0-k_1+2s=k=2s+1.$$ Thus $k_0=k_1+1\ge 1$ which shows that (\ref{map01}) is surjective. We thus have proved that (\ref{map01}) is a bijection between $(\hat{\Omega}^{(a)}_{s+1})'$ and $\hat{\Omega}^{(a)}_{s}$.

By Theorem \ref{affinesym}, we have $\mathcal{D}_{(k_a,k_{a-2},\cdots, k_1, k_{0})}=\mathcal{D}_{(k_a,k_{a-2},\cdots, k_0, k_1)}$. This proves our assertion.
\end{proof}

We now assume that the level $k$ is even and $s=\frac{k}{2}$. 
\begin{lemma}\label{koddm4}
Let $a$ be even and $2\le a \le r-2$. If $\hat\omega=(k_a,k_{a-2},\cdots, k_2, -2)\in \hat{P}^k$ satisfies $k_a+k_{a-2}+\cdots+k_2=s+1$ and $k_2=0$, then $\mathcal{D}_{\hat\omega}=0$.
\end{lemma}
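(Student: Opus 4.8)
The plan is to exhibit an element $w\in W$ of odd signature fixing $\hat\omega$ under the shifted action, so that Theorem \ref{6:Sfixed0} immediately yields $\mathcal{D}_{\hat\omega}=0$. This is the same mechanism used in Propositions \ref{evence} and \ref{oddce}, where the reflection $s_0$ sufficed because the coefficient of $\hat\omega_0$ equaled $-1$. Here the coefficient of $\hat\omega_0$ is $-2$, so $s_0$ alone does not fix $\hat\omega$; instead I expect to need the length-three element $w=s_0 s_2 s_0$.

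To carry this out, I would first record the shifted-action formula in coordinates. Writing $\hat\mu=\sum_i \mu_i\hat\omega_i$, the definitions give $s_i\cdot\hat\mu=\hat\mu-(\mu_i+1)\hat\alpha_i$, so $s_i$ fixes $\hat\mu$ exactly when $\mu_i=-1$. Next I would compute $\hat\alpha_0$: since the highest root of $D_r$ is $\theta=\omega_2$ and the form is simply laced, one has $(\alpha_0|\alpha_i)=(-\theta|\alpha_i)=-\delta_{2i}$ for $i\ge 1$ while $(\alpha_0|\alpha_0)=2$, whence $\hat\alpha_0=2\hat\omega_0-\hat\omega_2$.

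With these in hand the computation is short. Because the coefficient of $\hat\omega_0$ in $\hat\omega$ is $-2$, one gets $s_0\cdot\hat\omega=\hat\omega+\hat\alpha_0=\hat\omega+2\hat\omega_0-\hat\omega_2$; using the hypothesis $k_2=0$, this weight has coefficient $0$ at $\hat\omega_0$ and coefficient $-1$ at $\hat\omega_2$. The value $-1$ at node $2$ is precisely the condition for $s_2$ to fix it, so $s_2\cdot(s_0\cdot\hat\omega)=s_0\cdot\hat\omega$, and applying $s_0$ once more gives $(s_0 s_2 s_0)\cdot\hat\omega=\hat\omega$. Equivalently, $\hat\omega+\hat\rho$ is orthogonal to the real affine root $\alpha_0+\alpha_2$ (its coefficients at nodes $0$ and $2$ being $-1$ and $+1$), so it lies on the reflecting hyperplane of $s_{\alpha_0+\alpha_2}=s_0s_2s_0$. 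Since any reflection has odd signature, Theorem \ref{6:Sfixed0} forces $\mathcal{D}_{\hat\omega}=0$.

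The only place demanding care is the bookkeeping of coefficients, and in particular seeing that the hypothesis $k_2=0$ is exactly what turns the $\hat\omega_2$-coefficient of $s_0\cdot\hat\omega$ into $-1$; had $k_2>0$ this argument would break, which is why the lemma isolates this case. Identifying $\hat\alpha_0=2\hat\omega_0-\hat\omega_2$ correctly (equivalently, recalling that node $0$ attaches to node $2$ in $D_r^{(1)}$) is the other small point to verify, but both are routine once the conventions are fixed.
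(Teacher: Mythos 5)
Your proof is correct and is exactly the paper's argument: the paper simply asserts that $(s_0s_2s_0)\cdot\hat\omega=\hat\omega$ and invokes Theorem \ref{6:Sfixed0}, while you supply the routine verification (via $\hat\alpha_0=2\hat\omega_0-\hat\omega_2$ and the role of $k_2=0$) that the paper leaves to the reader.
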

\begin{proof}
It is easy to check that $(s_0s_2s_0)\cdot \hat\omega=\hat\omega$. Theorem \ref{6:Sfixed0} now  gives the desired conclusion.
\end{proof}

\begin{prop}\label{evence2}
If $a$ is even and $2\le a \le r-2$, then $z^{(a)}_{s-1}=z^{(a)}_{s+1}$.
\end{prop}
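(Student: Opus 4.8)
The plan is to compute the difference $z^{(a)}_{s+1}-z^{(a)}_{s-1}$ directly as a sum of quantum dimensions and to show it vanishes by pairing its terms via the affine reflection $s_0$. Since $a$ is even and $2\le a\le r-2$, the relevant marks are $a_0=1$ and $a_i=2$ for $2\le i\le r-2$, so every weight $(k_a,k_{a-2},\dots,k_2,k_0)\in\hat\Omega^{(a)}_m$ satisfies the level relation $2(k_a+k_{a-2}+\cdots+k_2)+k_0=k=2s$. Because $\hat\Omega^{(a)}_{s-1}\subseteq\hat\Omega^{(a)}_{s+1}$, the difference is a sum over those weights with $k_a+\cdots+k_2\in\{s,s+1\}$; the level relation forces $k_0=0$ on the sum-$s$ weights and $k_0=-2$ on the sum-$(s+1)$ weights. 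Thus $z^{(a)}_{s+1}-z^{(a)}_{s-1}=S_s+S_{s+1}$, where $S_s$ collects $\mathcal D_{\hat\omega}$ over $\{k_a+\cdots+k_2=s,\ k_0=0\}$ and $S_{s+1}$ over $\{k_a+\cdots+k_2=s+1,\ k_0=-2\}$.

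Next I would trim $S_{s+1}$: by Lemma \ref{koddm4}, every weight in $S_{s+1}$ with $k_2=0$ has vanishing quantum dimension, so only the weights with $k_2\ge 1$ survive. The heart of the argument is then to exhibit a sign-reversing bijection between the index set of $S_s$ and this surviving part of $S_{s+1}$. The natural candidate is the shifted action of the affine simple reflection $s_0$. Since node $0$ is adjacent only to node $2$ in the extended diagram, $\hat\alpha_0=2\hat\omega_0-\hat\omega_2$, and a direct computation of the shifted action gives $s_0\cdot(k_a,\dots,k_4,k_2,0)=(k_a,\dots,k_4,k_2+1,-2)$. This map sends $\{k_a+\cdots+k_2=s,\ k_0=0\}$ bijectively onto $\{k_a+\cdots+k_2=s+1,\ k_0=-2,\ k_2\ge1\}$, precisely the surviving index set of $S_{s+1}$.

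Finally, Theorem \ref{6:Sfixed0} gives $\mathcal D_{s_0\cdot\hat\omega}=-\mathcal D_{\hat\omega}$ because $\ell(s_0)=1$ is odd. Summing this identity over the bijection yields $S_{s+1}=-S_s$, hence $z^{(a)}_{s+1}-z^{(a)}_{s-1}=S_s+S_{s+1}=0$, which is the assertion.

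I expect the main obstacle to be the explicit verification of the shifted $s_0$-action on these coordinates and the bookkeeping needed to confirm that its image is exactly the $k_2\ge1$ part of $S_{s+1}$ left over after Lemma \ref{koddm4}. The conceptual content is the realization that a single affine reflection, rather than a diagram automorphism (which sufficed in the odd-level cases precisely because a node $\hat\omega_1$ was available to swap with $\hat\omega_0$), is what converts the sum-$s$ boundary weights into the sum-$(s+1)$ boundary weights with the correct sign.
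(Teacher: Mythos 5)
Your proposal is correct and follows essentially the same route as the paper: the paper also splits $\hat{\Omega}^{(a)}_{s+1}\setminus\hat{\Omega}^{(a)}_{s-1}$ into the $k_2=0$ part killed by Lemma \ref{koddm4} and the remaining two families paired off by the shifted $s_0$-action (written there as $s_0\cdot(k_a,\dots,k_2,-2)=(k_a,\dots,k_2-1,0)$, the inverse of your map), with Theorem \ref{6:Sfixed0} supplying the sign cancellation. Your explicit verification of $\hat\alpha_0=2\hat\omega_0-\hat\omega_2$ and the resulting formula for $s_0\cdot(k_a,\dots,k_2,0)$ is accurate.
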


\begin{proof}
Recall that
$$
\hat{\Omega}^{(a)}_{s-1}= \left\{(k_a,k_{a-2},\cdots, k_2, k_{0})\in \hat{P}^k\mid
\begin{array}{ll}
k_a+k_{a-2}+\cdots+k_2 \le s-1\\
k_a,k_{a-2},\cdots, k_2 \in \mathbb{Z}_{\geq 0} \\
\end{array}
\right\}
$$
and 
$$
\hat{\Omega}^{(a)}_{s+1}= \left\{(k_a,k_{a-2},\cdots, k_2, k_{0})\in \hat{P}^k\mid
\begin{array}{ll}
k_a+k_{a-2}+\cdots+k_2 \le s+1\\
k_a,k_{a-2},\cdots, k_2 \in \mathbb{Z}_{\geq 0} \\
\end{array}
\right\}.
$$

Let us define three disjoint subsets $R,S$ and $T$ of $\hat{\Omega}^{(a)}_{s+1}$ by
\begin{align}
&R=\{(k_a,k_{a-2},\cdots, k_2, k_0)\in \hat{\Omega}^{(a)}_{s+1}|k_a+k_{a-2}+\cdots+k_2=s+1, k_2 = 0\}, \notag\\
&S=\{(k_a,k_{a-2},\cdots, k_2, k_0)\in \hat{\Omega}^{(a)}_{s+1}|k_a+k_{a-2}+\cdots+k_2=s+1, k_2 \ge 1\}, \notag\\
&T=\{(k_a,k_{a-2},\cdots, k_2, k_0)\in \hat{\Omega}^{(a)}_{s+1}|k_a+k_{a-2}+\cdots+k_2=s\}. \notag
\end{align}

For $\hat{\omega}\in R$, $\mathcal{D}_{\hat\omega}=0$ by Lemma \ref{koddm4} and so $\sum_{\hat{\omega}\in R} \mathcal{D}_{\hat{\omega}}=0$. We now want to prove $\sum_{\hat{\omega}\in S\cup T} \mathcal{D}_{\hat{\omega}}=0$. For $(k_a,k_{a-2},\cdots, k_2, k_{0})\in \hat{P}^k$, we have $k_0+2(k_a+k_{a-2}\cdots+k_2)=k=2s$. If $(k_a,k_{a-2},\cdots, k_2, k_0)\in S$, then $k_0=-2$. For $(k_a,k_{a-2},\cdots, k_2,k_0)\in T$, we have $k_0=0$. We have a bijection between $S$ and $T$ since $$s_0\cdot (k_a,k_{a-2},\cdots, k_2, -2)=(k_a,k_{a-2},\cdots, k_2-1, 0).$$
By Theorem \ref{6:Sfixed0}, $\sum_{\hat{\omega}\in S\cup T} \mathcal{D}_{\hat{\omega}}=0$. Consequently, $$z^{(a)}_{s+1}=\sum_{\hat{\omega}\in \hat{\Omega}^{(a)}_{s+1}} \mathcal{D}_{\hat{\omega}}=\sum_{\hat{\omega}\in \hat{\Omega}^{(a)}_{s+1}\setminus (R\cup S\cup T)} \mathcal{D}_{\hat{\omega}}.$$

From $\hat{\Omega}^{(a)}_{s-1}\subset \hat{\Omega}^{(a)}_{s+1}$ and $\hat{\Omega}^{(a)}_{s-1}=\hat{\Omega}^{(a)}_{s+1}\setminus (R\cup S\cup T)$, we obtain $z^{(a)}_{s+1}=z^{(a)}_{s-1}$.
\end{proof}

\begin{lemma}\label{koddm5}
Let $a$ be odd and $2\le a \le r-2$. If $\hat\omega=(k_a,k_{a-2},\cdots,1, -1)\in \hat{P}^k$ or $\hat\omega=(k_a,k_{a-2},\cdots, 0, -2)\in \hat{P}^k$, then $\mathcal{D}_{\hat\omega}=0$.
\end{lemma}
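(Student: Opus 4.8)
The plan is to handle both weights by the same device used in Lemma \ref{koddm4}: for each $\hat\omega$ I will exhibit an element $w\in W$ of odd signature fixing $\hat\omega$ under the shifted action, whereupon Theorem \ref{6:Sfixed0} forces $\mathcal{D}_{\hat\omega}=0$. The one computational fact I rely on throughout is the shifted fundamental reflection formula: writing $\hat\lambda=\sum_{i=0}^{r}\lambda_i\hat\omega_i$, one has $s_i\cdot\hat\lambda=\hat\lambda-(\lambda_i+1)\hat\alpha_i$, so that $s_i\cdot\hat\lambda=\hat\lambda$ exactly when the $\hat\omega_i$-coefficient of $\hat\lambda$ equals $-1$.

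First I dispose of $\hat\omega=(k_a,k_{a-2},\cdots,1,-1)$. Here the $\hat\omega_0$-coefficient is $k_0=-1$, so $s_0\cdot\hat\omega=\hat\omega$ immediately. Since $s_0$ is a fundamental reflection, $\ell(s_0)=1$ is odd, and Theorem \ref{6:Sfixed0} gives $\mathcal{D}_{\hat\omega}=0$.

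For $\hat\omega=(k_a,k_{a-2},\cdots,0,-2)$ the $\hat\omega_0$-coefficient is $-2$, so no single fundamental reflection fixes it; instead I would use $w=s_0s_2s_0$, which is the reflection $s_{\alpha_0+\alpha_2}$ in the positive root $\alpha_0+\alpha_2=s_0(\alpha_2)$ and therefore has odd signature. The key structural point is that, because $a$ is odd, node $2$ lies outside the support of $\hat\omega$, so its $\hat\omega_2$-coefficient is $0$. Tracking coefficients only on the nodes $0,1,2$ (all other coefficients are left unchanged by this word), applying $s_0\cdot$ sends $(-2,0,0)\mapsto(0,0,-1)$; the intermediate weight now has $\hat\omega_2$-coefficient $-1$, so $s_2\cdot$ fixes it; and a final $s_0\cdot$ returns $(0,0,-1)\mapsto(-2,0,0)$. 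Hence $(s_0s_2s_0)\cdot\hat\omega=\hat\omega$, and Theorem \ref{6:Sfixed0} once more yields $\mathcal{D}_{\hat\omega}=0$.

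The only genuine subtlety lies in the second case, where one must both guess the correct reflecting root $\alpha_0+\alpha_2$ and confirm that the associated element has odd signature. Recognizing $s_0s_2s_0$ as a single reflection, conjugate to $s_2$, settles the signature at once, while the reason it fixes $\hat\omega$ is the orthogonality encoded in the vanishing of the sum of the $\hat\omega_0$- and $\hat\omega_2$-coefficients of $\hat\omega+\hat\rho$, namely $(\lambda_0+1)+(\lambda_2+1)=(-1)+(1)=0$. This orthogonality, rather than the explicit coefficient bookkeeping, is the conceptual heart of the argument, and it is exactly what breaks down for even $a$, explaining why the even case required the separate treatment in Lemma \ref{koddm4}.
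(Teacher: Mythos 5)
Your argument is correct and is exactly the paper's proof: $s_0$ fixes the first weight because its $\hat\omega_0$-coefficient is $-1$, and $s_0s_2s_0$ fixes the second, with Theorem \ref{6:Sfixed0} finishing both cases; your coefficient bookkeeping for $(s_0s_2s_0)\cdot\hat\omega$ checks out. Only your closing aside is off: the orthogonality $(\lambda_0+1)+(\lambda_2+1)=0$ does \emph{not} break down for even $a$ — Lemma \ref{koddm4} uses the very same element $s_0s_2s_0$ under the hypothesis $k_2=0$, and the separate statement is needed only because the parametrization of $\hat\Omega^{(a)}_m$ differs between the two parities.
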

\begin{proof}
For $\hat\omega=(k_a,k_{a-2},\cdots,1, -1)\in \hat{P}^k$, it is easy to see that $s_0\cdot \hat\omega=\hat\omega$. For $\hat\omega=(k_a,k_{a-2},\cdots, 0, -2)\in \hat{P}^k$, we can show $(s_0s_2s_0)\cdot \hat\omega=\hat\omega$. The lemma follows from Theorem \ref{6:Sfixed0}.
\end{proof}

\begin{prop}\label{oddce2}
If  $a$ is odd and $2\le a \le r-2$, then $z^{(a)}_{s-1}=z^{(a)}_{s+1}$
\end{prop}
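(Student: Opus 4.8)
The plan is to treat this as the even-level analogue of Proposition \ref{oddce}. Because $a$ is odd, the defining constraint for $\hat{\Omega}^{(a)}_{m}$ is the \emph{equality} $k_a+k_{a-2}+\cdots+k_1=m$, so the two index sets $\hat{\Omega}^{(a)}_{s-1}$ and $\hat{\Omega}^{(a)}_{s+1}$ are disjoint and neither contains the other. Unlike the even-$a$ case (Proposition \ref{evence2}), where one exploits containment together with sign cancellation under $s_0$, here I would match the two sums $z^{(a)}_{s-1}=\sum_{\hat{\Omega}^{(a)}_{s-1}}\mathcal{D}_{\hat\omega}$ and $z^{(a)}_{s+1}=\sum_{\hat{\Omega}^{(a)}_{s+1}}\mathcal{D}_{\hat\omega}$ term by term by means of a quantum-dimension-preserving bijection, after first discarding a small number of vanishing terms.

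First I would pin down the $k_0$ coordinate. Combining the level condition $2(k_a+k_{a-2}+\cdots+k_3)+k_1+k_0=k$ with the defining constraint $k_a+k_{a-2}+\cdots+k_3+k_1=m$ yields $k_0=k-2m+k_1$, hence $k_0=k_1-2$ on $\hat{\Omega}^{(a)}_{s+1}$ and $k_0=k_1+2$ on $\hat{\Omega}^{(a)}_{s-1}$ (recall $s=k/2$). On $\hat{\Omega}^{(a)}_{s+1}$ the value $k_0=k_1-2$ is negative precisely when $k_1\in\{0,1\}$; but these are exactly the weights $(k_a,\cdots,0,-2)$ and $(k_a,\cdots,1,-1)$ shown to satisfy $\mathcal{D}_{\hat\omega}=0$ in Lemma \ref{koddm5}. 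Thus $z^{(a)}_{s+1}$ equals the partial sum of $\mathcal{D}_{\hat\omega}$ over those $\hat\omega\in\hat{\Omega}^{(a)}_{s+1}$ with $k_1\ge 2$, where automatically $k_0=k_1-2\ge 0$. I would then invoke the automorphism of the extended Dynkin diagram $D_r^{(1)}$ that exchanges the two fork nodes $0$ and $1$; on coordinates this is the swap $(k_a,\cdots,k_1,k_0)\mapsto(k_a,\cdots,k_0,k_1)$, which preserves the level since $a_0=a_1=1$ and preserves $\mathcal{D}$ by Theorem \ref{affinesym}.

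It then remains to check that this swap carries $\{\hat\omega\in\hat{\Omega}^{(a)}_{s+1}\mid k_1\ge 2\}$ bijectively onto $\hat{\Omega}^{(a)}_{s-1}$: a weight $(k_a,\cdots,k_1,k_1-2)$ maps to $(k_a,\cdots,k_1-2,k_1)$, whose defining sum drops by $2$ to $s-1$, whose coordinates are all nonnegative, and whose last coordinate equals $(k_1-2)+2$, matching the relation $k_0=k_1+2$ that characterizes $\hat{\Omega}^{(a)}_{s-1}$; the inverse is the same swap. Summing quantum dimensions then gives $z^{(a)}_{s+1}=z^{(a)}_{s-1}$. The only delicate point — and the sole genuine obstacle — is this bookkeeping: that the offset of $2$ between $s+1$ and $s-1$ is exactly absorbed by the offset of $2$ between $k_1$ and $k_0$, so that the swap lands precisely on $\hat{\Omega}^{(a)}_{s-1}$ with no weight left over or missing. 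Once the two boundary terms are removed via Lemma \ref{koddm5}, this is routine, and all the substantive input has already been supplied by Lemma \ref{koddm5} and Theorem \ref{affinesym}.
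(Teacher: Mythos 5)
Your proposal is correct and follows essentially the same route as the paper: discard the $k_1\in\{0,1\}$ terms via Lemma \ref{koddm5}, then match the remaining terms of $z^{(a)}_{s+1}$ with those of $z^{(a)}_{s-1}$ by the coordinate swap $(k_a,\ldots,k_1,k_0)\mapsto(k_a,\ldots,k_0,k_1)$ and invoke Theorem \ref{affinesym}. The bookkeeping you flag (the offset of $2$ in the index being absorbed by the offset of $2$ between $k_0$ and $k_1$) is exactly the verification the paper carries out.
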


\begin{proof}
For any $\hat{\omega}=(k_a,k_{a-2},\cdots, k_1, k_0)\in \hat{\Omega}^{(a)}_{s+1}$ with $k_1=0$ or $k_1=1$, $\mathcal{D}_{\hat\omega}=0$ by Lemma \ref{koddm5}.

Let us define $(\hat{\Omega}^{(a)}_{s+1})'$ by 
$$\left\{(k_a,k_{a-2},\cdots, k_1, k_{0})\in \hat{P}^k\mid
\begin{array}{ll}
k_a+k_{a-2}+\cdots+k_1=s+1 \\
k_a,k_{a-2},\cdots,k_3 \in \mathbb{Z}_{\geq 0},k_1\ge 2 \\
\end{array}
\right\}.
$$

Then we can write $z^{(a)}_{s+1}=\sum_{\hat{\omega}\in (\hat{\Omega}^{(a)}_{s+1})'} \mathcal{D}_{\hat{\omega}}$.

Let us define a map from $(\hat{\Omega}^{(a)}_{s+1})'$ to $\hat{\Omega}^{(a)}_{s-1}$ by 
\begin{equation}\label{s111}
(k_a,k_{a-2},\cdots, k_1, k_{0}) \mapsto (k_a,k_{a-2},\cdots, k_0, k_1).
\end{equation}
For $(k_a,k_{a-2},\cdots, k_1, k_{0})\in (\hat{\Omega}^{(a)}_{s+1})'$,
$$k_0+k_1+2(k_a+k_{a-2}+\cdots+k_3)=k_0+k_1+2(s+1-k_1)=k=2s.$$
Hence $k_0=k_1-2\ge 0$ and it shows that $(k_a,k_{a-2},\cdots, k_0, k_1)\in \hat{\Omega}^{(a)}_{s-1}$ and thus the map (\ref{s111}) is well-defined. It is clear that this is injective.

Conversely, any element $(k_a,k_{a-2},\cdots, k_1, k_{0})\in \hat{\Omega}^{(a)}_{s-1}$ satisfies $$k_0+k_1+2(s-1-k_1)=k_0-k_1+2s-2=2s.$$ Thus $k_0=k_1+2\ge 2$ and it proves that (\ref{s111}) is surjective and thus bijective.

By Theorem \ref{affinesym}, $\mathcal{D}_{(k_a,k_{a-2},\cdots, k_1, k_0)}=\mathcal{D}_{(k_a,k_{a-2},\cdots, k_0, k_1)}$ and it proves our proposition.
\end{proof}

\section{proof of the main theorem}\label{sec:mainpf}
In this section, we prove our main theorem using the results obtained in the previous section.

\begin{lemma}\label{zeq}
Let $\mathbf{w}=(w^{(a)}_{m})$ be a solution of the level $k$ restricted $Q$-system such that $w^{(a)}_{m}\neq 0$ for $0\leq m\leq k$ and $a\in I$. If $w^{(a)}_{1}=Q^{(a)}_{1}$ for any $a\in I$ and $\{ Q^{(a)}_{m}|a\in I, m\in \mathbb{Z}_{\geq 0} \}$ satisfies the unrestricted $Q$-system, then $w^{(a)}_{m}=Q^{(a)}_{m}$ for $0\leq m\leq k$ and $a\in I$. In particular, $Q^{(a)}_{k}=1$.
\end{lemma}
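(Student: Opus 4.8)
The plan is to prove $w^{(a)}_m = Q^{(a)}_m$ for all $a\in I$ and $0\le m\le k$ by induction on $m$, exploiting the fact that once the nonvanishing hypothesis is in force the restricted $Q$-system propagates forward uniquely. The two base cases are immediate: for $m=0$ both sequences equal $1$ by the boundary condition $Q^{(a)}_0=w^{(a)}_0=1$, and for $m=1$ they agree by the hypothesis $w^{(a)}_1=Q^{(a)}_1$.

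For the inductive step I would fix $m$ with $1\le m\le k-1$ and assume $w^{(a)}_j=Q^{(a)}_j$ for all $a\in I$ and all $0\le j\le m$. The key observation is to rewrite both recurrences so that the unknown entry $w^{(a)}_{m+1}$ is isolated. Since $\mathbf{w}$ solves the restricted $Q$-system and its defining equation is valid precisely for $1\le m<k$, I would solve for the product to get $w^{(a)}_{m-1}w^{(a)}_{m+1}=(w^{(a)}_m)^2-\prod_{b\in I}(w^{(b)}_m)^{\mathcal{I}(X)_{ab}}$, and similarly $Q^{(a)}_{m-1}Q^{(a)}_{m+1}=(Q^{(a)}_m)^2-\prod_{b\in I}(Q^{(b)}_m)^{\mathcal{I}(X)_{ab}}$ from the unrestricted system. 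The right-hand sides involve only entries at indices $m-1$ and $m$, which by the inductive hypothesis coincide for the two sequences; hence $w^{(a)}_{m-1}w^{(a)}_{m+1}=Q^{(a)}_{m-1}Q^{(a)}_{m+1}$. Because $w^{(a)}_{m-1}=Q^{(a)}_{m-1}$ is nonzero by hypothesis, I can divide and conclude $w^{(a)}_{m+1}=Q^{(a)}_{m+1}$, completing the induction. The final claim $Q^{(a)}_k=1$ then follows by taking $m=k$ in the identity just established and invoking the restricted boundary condition $w^{(a)}_k=1$.

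The argument is essentially a uniqueness statement for forward propagation, so there is no deep obstacle; the single point requiring care is that the nonvanishing hypothesis on $\mathbf{w}$ is exactly what licenses the division by $w^{(a)}_{m-1}$, and that the restricted defining equations are available only for $1\le m<k$, so the induction correctly terminates at $m+1=k$ and never invokes an equation at $m=k$ that $\mathbf{w}$ need not satisfy. I would also remark that the product term $\prod_{b\in I}(w^{(b)}_m)^{\mathcal{I}(X)_{ab}}$ poses no difficulty, since the adjacency matrix entries are nonnegative integers and hence no additional nonvanishing is required there.
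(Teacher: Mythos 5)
Your proof is correct and is essentially the paper's argument: the paper simply notes that the rearranged recursion $Q^{(a)}_{m+1}=\bigl((Q^{(a)}_{m})^2-\prod_{b}(Q^{(b)}_{m})^{\mathcal{I}(X)_{ab}}\bigr)/Q^{(a)}_{m-1}$ propagates the agreement forward, and you have merely written out that forward induction in full, correctly noting that the nonvanishing hypothesis licenses the division and that the induction stops at $m+1=k$.
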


\begin{proof}
This is a direct consequence of the recursion (\ref{4:Qsys})
$$Q^{(a)}_{m+1}=\frac{(Q^{(a)}_{m})^2-\prod _{b\in I} (Q^{(b)}_{i})^{\mathcal{I}(X)_{ab}}}{Q^{(a)}_{m-1}}.$$
\end{proof}

\begin{theorem}\label{main1}
For all $a\in I$, the following properties hold :
\begin{enumerate}
\item (positivity) $z^{(a)}_{m}>0$ for $0\leq m \leq k$,
\item (symmetry) $z^{(a)}_{m}=z_{k-m}^{(a)}$ for $1\leq m \leq k-1$,
\item (unit boundary condition) $z^{(a)}_{k}=1$,
\item (unimodality) 
$z_{m-1}^{(a)}<z^{(a)}_{m}$ for $1\le m \le \lfloor\frac{k}{2}\rfloor$.
\end{enumerate}
\end{theorem}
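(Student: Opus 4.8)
The plan is to identify the quantum dimension solution $\mathbf{z}=(z^{(a)}_m)$ with the positive solution of the level $k$ restricted $Q$-system characterized in Theorem \ref{5:Quni}; once that identification is made, properties (1), (2) and (4) follow immediately. Since $\mathbf{z}$ already satisfies the unrestricted $Q$-system (\ref{4:Qsys}) with $z^{(a)}_0=1$, the only two facts that must be verified before Theorem \ref{5:Quni} can be invoked are positivity $z^{(a)}_m>0$ on the whole range $0\le m\le k$ and the unit boundary condition $z^{(a)}_k=1$. For $a\in\{1,r-1,r\}$ both are already contained in Proposition \ref{tipsprop}, so the entire difficulty lies in the range $2\le a\le r-2$, and, as the introduction indicates, the unit boundary condition is the genuine obstacle.

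First I would dispose of positivity on the lower half. For $2\le a\le r-2$ and $0\le m\le s$, where $s=\lfloor k/2\rfloor$, every weight $\omega$ occurring in the decomposition (\ref{6:qdecompD}) has $\sum_{i\ge 1}a_i\lambda_i\le 2m\le k$ for its affinization $\hat\omega$, as one checks directly from the marks of $D_r$ ($a_1=a_{r-1}=a_r=1$ and $a_i=2$ otherwise). Hence $\hat\omega\in\hat{P}_{+}^{k}$ and $\mathcal{D}_{\hat\omega}>0$ by Theorem \ref{6:qdimpos}, so that $z^{(a)}_m=\sum_{\omega}\mathcal{D}_{\hat\omega}>0$ for $0\le m\le s$.

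The heart of the argument is to upgrade the single ``middle'' identity established in Section \ref{sec:comp} into the full symmetry $z^{(a)}_m=z^{(a)}_{k-m}$. Consider the reversed sequence $\tilde z^{(a)}_m=z^{(a)}_{k-m}$; because (\ref{4:Qsys}) is invariant under $m\mapsto k-m$, the family $\tilde{\mathbf{z}}$ again satisfies the unrestricted $Q$-system for $1\le m\le k-1$. Propositions \ref{evence}, \ref{oddce} (for $k$ odd) and \ref{evence2}, \ref{oddce2} (for $k$ even), together with Proposition \ref{tipsprop} for $a\in\{1,r-1,r\}$, show that $z^{(a)}_m$ and $\tilde z^{(a)}_m$ coincide for \emph{every} $a\in I$ at two consecutive levels adjacent to the centre, namely $m=s,s+1$ when $k$ is odd and $m=s-1,s$ when $k$ is even. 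As in Lemma \ref{zeq}, two consecutive levels at which all entries are nonzero determine a solution of the $Q$-system uniquely, since the recursion may be solved for $z^{(a)}_{m-1}$; the positivity just obtained on $[0,s]$ (which also forces $z^{(a)}_{s+1}=z^{(a)}_{s}>0$ in the odd case) supplies exactly the nonvanishing needed to iterate the recursion downward from the centre to $m=0$. This yields $z^{(a)}_m=\tilde z^{(a)}_m=z^{(a)}_{k-m}$ for all $0\le m\le k$.

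This single identity delivers the rest at once: its value at $m=0$ gives the unit boundary condition $z^{(a)}_k=z^{(a)}_0=1$, and reflecting the lower-half positivity across the centre gives $z^{(a)}_m>0$ on all of $[0,k]$. Thus $\mathbf{z}$ is a positive solution of the level $k$ restricted $Q$-system, and by the uniqueness in Theorem \ref{5:Quni} it coincides with the positive solution, so symmetry (2) and unimodality (4) follow. I expect the main obstacle to be this symmetry-propagation step: one must confirm that the identities of Section \ref{sec:comp} really match all components at two \emph{consecutive} levels, and that the positivity window $[0,\lfloor k/2\rfloor]$ is precisely wide enough to keep every denominator nonzero as the recursion is run down to $m=0$.
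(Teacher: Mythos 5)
Your proposal is correct and follows essentially the same route as the paper: positivity on $0\le m\le\lfloor k/2\rfloor$ via Theorem \ref{6:qdimpos}, the middle identities of Propositions \ref{tipsprop}, \ref{evence}--\ref{oddce2} to match $\mathbf z$ with its reflection, propagation through the $Q$-system recursion with positivity supplying the nonzero denominators, and finally Theorem \ref{5:Quni} for unimodality. The only cosmetic difference is that the paper propagates upward from $m=0,1$ by comparing $\mathbf z$ with the folded restricted solution $\mathbf w$ (Lemma \ref{zeq}), whereas you propagate downward from the centre by comparing $\mathbf z$ with the reversed sequence; the two are equivalent since the recursion is symmetric in $m\pm1$.
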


\begin{proof}
To prove the unit boundary condition, we divide the argument into two cases when $k$ is odd and $k$ is even.

Assume first that $k$ is odd and $s=\frac{k-1}{2}$. For $0\leq m\leq k$ and $a\in I$, let us define $w^{(a)}_{m}$ by
$$
w^{(a)}_{m}=\left\{
\begin{array}{ll}
z^{(a)}_{m} & 0\leq m\leq s\\
z^{(a)}_{k-m}& s<m\leq k\\
\end{array}
\right..
$$

Since we have $z^{(a)}_{s}=z^{(a)}_{s+1}$ for any $a\in I$ by Proposition \ref{tipsprop}, \ref{evence} and \ref{oddce}, $\mathbf{w}=(w^{(a)}_{m})$ must be a solution of the level $k$ restricted $Q$-system. Since $w^{(a)}_{1}=z^{(a)}_{1}$ and $w^{(a)}_{m}>0$ for all $0\leq m\leq k$, we can conclude that $z^{(a)}_{m}=w^{(a)}_{m}$ for $0\leq m\leq k$ by Lemma \ref{zeq}. Especially, $z^{(a)}_{k}=1$ for any $a\in I$.
 
Assume that $k$ is even and let $s=\frac{k}{2}$. For $0\leq m\leq k$ and $a\in I$, let
\begin{equation}
w^{(a)}_{m}=\left\{
\begin{array}{ll}
z^{(a)}_{m} & 0\leq m\leq s\\
z^{(a)}_{k-m}& s<m\leq k\\
\end{array}
\right..
\end{equation}
 
Since we have $z^{(a)}_{s-1}=z^{(a)}_{s+1}$ for $a\in I$ by Proposition \ref{tipsprop}, \ref{evence2} and \ref{oddce2}, $\mathbf{w}=(w^{(a)}_{m})$ is a solution of the level $k$ restricted $Q$-system. By the same argument as above, we can conclude that the unit boundary condition $z^{(a)}_{k}=1$ holds for any $a\in I$. 

The properties of positivity and symmetry can be easily obtained from the definition for $w^{(a)}_{m}$. Now we know that $\mathbf{w}=\mathbf{z}=(z^{(a)}_{m})$ is the positive solution of the level $k$ restricted $Q$-system characterized in Theorem \ref{5:Quni} and it follows that $z_{m-1}^{(a)}<z^{(a)}_{m}$ for $1\le m \le \lfloor\frac{k}{2}\rfloor$.
\end{proof}

Now we prove $z^{(a)}_{k+j}=0$ for any $a\in I$ and $1 \le j \le h -1$.
\begin{proposition}\label{kpl10}
For all $a\in I$, $z^{(a)}_{k+1}=0$.
\end{proposition}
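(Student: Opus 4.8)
The plan is to avoid any further quantum-dimension bookkeeping and instead read off the vanishing directly from the $Q$-system, now that Theorem \ref{main1} is available. Recall that the numbers $z^{(a)}_{m}=Q^{(a)}_{m}(\tfrac{\rho}{h+k})$ are obtained by specializing the classical characters $Q^{(a)}_{m}$, which satisfy the \emph{unrestricted} $Q$-system; since specialization at a fixed point turns the character identity \eqref{4:Qsys} into a numerical identity, the family $(z^{(a)}_{m})_{m\ge 0}$ satisfies \eqref{4:Qsys} for every $m\ge 1$ and every $a\in I$. In particular the relation holds at $m=k$, which is exactly where the boundary data from Theorem \ref{main1} becomes decisive.

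Concretely, I would evaluate \eqref{4:Qsys} at $m=k$, namely
$$\bigl(z^{(a)}_{k}\bigr)^2=\prod_{b\in I}\bigl(z^{(b)}_{k}\bigr)^{\mathcal{I}(X)_{ab}}+z^{(a)}_{k-1}\,z^{(a)}_{k+1},$$
and substitute the unit boundary condition $z^{(b)}_{k}=1$ for all $b\in I$, which is part (3) of Theorem \ref{main1}. The left-hand side equals $1$ and the product on the right equals $\prod_{b\in I}1^{\mathcal{I}(X)_{ab}}=1$, so the two cancel and leave $z^{(a)}_{k-1}\,z^{(a)}_{k+1}=0$. Finally I would invoke positivity, part (1) of Theorem \ref{main1}, which gives $z^{(a)}_{k-1}>0$ (using $z^{(a)}_{0}=1>0$ in the degenerate case $k=1$); dividing through yields $z^{(a)}_{k+1}=0$ for every $a\in I$.

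The point worth stressing is that there is essentially no obstacle left to overcome here: the entire difficulty was already absorbed into establishing the unit boundary condition $z^{(a)}_{k}=1$ and positivity in Theorem \ref{main1}, and the present statement is a one-line corollary of those facts together with the $Q$-system recursion. This is cleaner than continuing the case-by-case affine Weyl cancellation used in Section \ref{sec:comp}; for the tip nodes $a\in\{1,r-1,r\}$ one could alternatively reread the $j=1$ instance of \eqref{1zer0}, since the product formula \eqref{6:qdimdef} for $z^{(a)}_{k+1}$ contains the factor $\sin\frac{\pi(\operatorname{ht}\theta+k+1)}{h+k}=\sin\pi=0$ coming from the highest root $\theta$, but the recursion argument handles all $a\in I$ uniformly and is therefore the route I would take.
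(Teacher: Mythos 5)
Your proof is correct and is essentially identical to the paper's own argument: both evaluate the $Q$-system recursion at $m=k$, use the unit boundary condition to cancel the two sides, and divide by $z^{(a)}_{k-1}\neq 0$ (guaranteed by positivity from Theorem \ref{main1}). Nothing further is needed.
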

\begin{proof}
Since $z^{(a)}_{k}=1$ and $z^{(a)}_{k-1}\neq 0$ for all $a\in I$ by Theorem \ref{main1}, the recursion (\ref{4:Qsys})
$$(z^{(a)}_{k})^2=\prod _{b\in I} (z^{(b)}_{k})^{\mathcal{I}(X)_{ab}}+z^{(a)}_{k-1}z^{(a)}_{k+1}$$
implies $z^{(a)}_{k+1}=0$.
\end{proof}

\begin{lemma}\label{arr0}
Let $\{ Q^{(a)}_{m}|a\in I, m\in \mathbb{Z}_{\geq 0} \}$ be a solution of the unrestricted $Q$-system. The following condition
$$\left[\begin{array}{ccc}
Q^{(a-1)}_{m-1} & Q^{(a)}_{m-1}  & Q^{(a+1)}_{m-1} \\
Q^{(a-1)}_{m}  & Q^{(a)}_{m} & Q^{(a+1)}_{m} \\
Q^{(a-1)}_{m+1}  & Q^{(a)}_{m+1} & Q^{(a+1)}_{m+1}
\end{array}\right]=\left[\begin{array}{ccc}
 * & 0 & * \\
 0 & Q^{(a)}_{m} & * \\
 * & * & *
\end{array}\right]$$
implies $Q^{(a)}_{m}=0$ for $2\leq a\leq r-2$ where $*$ denotes an arbitrary number.
Similarly, the condition
$$\left[\begin{array}{ccc}
Q^{(r-2)}_{i-1} & Q^{(r-1)}_{i-1}  & Q^{(r)}_{i-1} \\
Q^{(r-2)}_{i}  & Q^{(r-1)}_{i} & Q^{(r)}_{i} \\
Q^{(r-2)}_{i+1}  & Q^{(r-1)}_{i+1} & Q^{(r)}_{i+1}
\end{array}\right]=\left[\begin{array}{ccc}
 * & 0 & 0 \\
 0 & Q^{(r-1)}_{m} & Q^{(r)}_{m} \\
 * & * & *
\end{array}\right]$$ 
implies $Q^{(r-1)}_{m}=Q^{(r)}_{m}=0$.
\end{lemma}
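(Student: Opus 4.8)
The plan is to read both statements directly off the $Q$-system recurrence (\ref{4:Qsys}), exploiting the fact that its right-hand side is a sum of exactly two terms: a \emph{horizontal} product $\prod_{b}(Q^{(b)}_m)^{\mathcal{I}(X)_{ab}}$ over the neighbours of $a$ at the \emph{same} level $m$, and a \emph{vertical} product $Q^{(a)}_{m-1}Q^{(a)}_{m+1}$. The two prescribed zeros in each displayed matrix are placed precisely so as to annihilate one factor in each of these two terms, forcing $(Q^{(a)}_m)^2=0$.

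First I would treat the claim for $2\le a\le r-2$. The hypothesis encoded in the matrix is $Q^{(a)}_{m-1}=0$ (the $(1,2)$ entry) together with $Q^{(a-1)}_{m}=0$ (the $(2,1)$ entry). Writing out (\ref{4:Qsys}) for this $a$,
$$\left(Q^{(a)}_{m}\right)^2=\prod_{b\in I}\left(Q^{(b)}_{m}\right)^{\mathcal{I}(X)_{ab}}+Q^{(a)}_{m-1}Q^{(a)}_{m+1},$$
the vertical term vanishes because $Q^{(a)}_{m-1}=0$. For the horizontal term I would observe that in the $D_r$ diagram the node $a-1$ is always a neighbour of $a$ throughout the range $2\le a\le r-2$ (for $2\le a\le r-3$ the neighbours are exactly $a-1,a+1$, while for $a=r-2$ they are $r-3,r-1,r$), so $Q^{(a-1)}_m$ occurs as a factor of the product; since it is $0$, the entire product is $0$. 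Hence $(Q^{(a)}_m)^2=0$ and $Q^{(a)}_m=0$.

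The second claim is the same computation carried out simultaneously at the two fork nodes $r-1$ and $r$, each of which has $r-2$ as its unique neighbour. The matrix hypothesis now gives $Q^{(r-2)}_m=0$ (the $(2,1)$ entry) together with $Q^{(r-1)}_{m-1}=Q^{(r)}_{m-1}=0$ (the $(1,2)$ and $(1,3)$ entries). For $a=r-1$ the horizontal term of (\ref{4:Qsys}) reduces to $Q^{(r-2)}_m=0$ while the vertical term carries the factor $Q^{(r-1)}_{m-1}=0$, so $Q^{(r-1)}_m=0$; the identical argument with $r-1$ replaced by $r$ yields $Q^{(r)}_m=0$.

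I expect no genuine obstacle here: the lemma is a bookkeeping consequence of (\ref{4:Qsys}). The only point requiring a little care is the adjacency structure at the branch node, namely verifying that $a-1$ is really a neighbour of $a$ for every $a$ with $2\le a\le r-2$ (including $a=r-2$, where the horizontal product has three factors but still vanishes because one of them is $Q^{(r-3)}_m=0$), and that $r-2$ is the sole neighbour of both $r-1$ and $r$.
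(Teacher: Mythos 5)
Your proof is correct and follows exactly the paper's argument: in each case the hypothesized zeros kill both the neighbour product and the term $Q^{(a)}_{m-1}Q^{(a)}_{m+1}$ in the recurrence (\ref{4:Qsys}), forcing $(Q^{(a)}_m)^2=0$. The paper's own proof is just a terser version of the same computation, so there is nothing to add.
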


\begin{proof}
This is again a consequence of the recursion (\ref{4:Qsys}) for $Q^{(a)}_{m}$:
$$(Q^{(a)}_{m})^2=\prod _{b\in I} (Q^{(b)}_{i})^{\mathcal{I}(X)_{ab}}+Q^{(a)}_{m-1}Q^{(a)}_{m+1}.$$
In both cases, we obtain $(Q^{(a)}_{m})^2=0$.
\end{proof}

\begin{theorem}\label{main2}
$z^{(a)}_{k+j}=0$ for any $a\in I$ and $1 \le j \le h -1$.
\end{theorem}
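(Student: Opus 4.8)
The plan is to propagate the vanishing outward from the single column $a=1$ and the single row $m=k+1$ across the whole grid $\{(a,m)\mid 1\le a\le r,\ k+1\le m\le k+h-1\}$ by a double induction, using the recursion of the unrestricted $Q$-system in the form of Lemma~\ref{arr0}. Recall that the evaluation $e^{\lambda}\mapsto e^{2\pi i(\lambda\mid \rho/(h+k))}$ is a ring homomorphism $\mathbb{C}[P]\to\mathbb{C}$, so the family $\{z^{(a)}_{m}\}$ inherits the unrestricted $Q$-system from $\{Q^{(a)}_{m}\}$; hence Lemma~\ref{arr0} applies verbatim with each $Q^{(a)}_{m}$ replaced by $z^{(a)}_{m}$. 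The two pieces of boundary data are already available: Proposition~\ref{1prop} supplies the full left wall $z^{(1)}_{k+j}=0$ for $1\le j\le h-1$, and Proposition~\ref{kpl10} supplies the top row $z^{(a)}_{k+1}=0$ for every $a\in I$.

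First I would treat the chain nodes $2\le a\le r-2$ by an outer induction on $a$ and, for each $a$, an inner induction on $j$. Fix $a$ and assume $z^{(a-1)}_{k+j}=0$ for all $1\le j\le h-1$; this is Proposition~\ref{1prop} when $a=2$ and the outer induction hypothesis otherwise. The base case $j=1$ is Proposition~\ref{kpl10}. For the step $j\ge 2$, the entry $z^{(a)}_{k+j-1}$ vanishes by the inner induction and $z^{(a-1)}_{k+j}$ vanishes by the outer induction; these are exactly the two prescribed zeros in the first array of Lemma~\ref{arr0} centered at $(a,k+j)$ (here the adjacency of node $a-1$ to node $a$ guarantees that $z^{(a-1)}_{k+j}$ is a factor of the product term, which is all that is used), so the lemma gives $z^{(a)}_{k+j}=0$. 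The point that keeps the vanishing range from contracting into a light cone is that the left wall from Proposition~\ref{1prop} already covers the \emph{entire} interval $1\le j\le h-1$; this is precisely where the full strength of Proposition~\ref{1prop} enters. Running the outer induction up to $a=r-2$ yields $z^{(a)}_{k+j}=0$ for all $2\le a\le r-2$ and $1\le j\le h-1$.

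Finally I would dispatch the two fork tips $a=r-1,r$ simultaneously with the second array of Lemma~\ref{arr0}, whose left column is node $r-2$. Since $z^{(r-2)}_{k+j}=0$ is now known for $1\le j\le h-1$, I induct on $j$ once more: the case $j=1$ is Proposition~\ref{kpl10}, and for $j\ge 2$ the pair $z^{(r-1)}_{k+j-1}=z^{(r)}_{k+j-1}=0$ (inner induction) together with $z^{(r-2)}_{k+j}=0$ matches the hypotheses of the second part of Lemma~\ref{arr0}, forcing $z^{(r-1)}_{k+j}=z^{(r)}_{k+j}=0$. This exhausts all nodes.

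I do not expect a genuine obstacle here, since every step is a direct invocation of Lemma~\ref{arr0}; the only delicate bookkeeping is to verify that the base row $m=k+1$ and the base case $j=1$ of the fork nodes are covered by Proposition~\ref{kpl10} and \emph{not} by the lemma itself, whose hypotheses would fail at $j=1$ because $z^{(r-1)}_{k}=z^{(r)}_{k}=1\neq 0$ by Theorem~\ref{main1}. I would also stress that no extended-diagram symmetry is required to reach the fork tips: the $Q$-system recursion alone carries the vanishing from node $r-2$ onto nodes $r-1$ and $r$.
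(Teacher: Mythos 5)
Your proof is correct and is essentially the paper's own argument: the paper likewise starts from the left wall $z^{(1)}_{k+j}=0$ of Proposition~\ref{1prop} and the top row $z^{(a)}_{k+1}=0$ of Proposition~\ref{kpl10}, and then invokes ``repeated application of Lemma~\ref{arr0}'' --- which is precisely the double induction you spell out, including the use of the second array of the lemma for the fork nodes $r-1,r$. Your version merely makes explicit the bookkeeping (and the observation that only the factor $z^{(a-1)}_{k+j}$ of the product term is needed at the trivalent node) that the paper leaves implicit.
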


\begin{proof}
Recall (\ref{1zer0}) that $z^{(1)}_{k+j}=0$ for $1 \le j \le h -1$. Since $z^{(a)}_{k+1}=0$ for any $a\in I$ by Proposition \ref{kpl10}, we get $z^{(a)}_{k+2}=0$ by applying Lemma \ref{arr0}. Repeated application of Lemma \ref{arr0} enables us to prove $z^{(a)}_{k+j}=0$ for any $a\in I$ and $1 \le j \le h -1$.
\end{proof}

For the following lemma, let us set $Q^{(0)}_{m}=1$ for convenience.
\begin{lemma}\label{arr1}
Let $\{ Q^{(a)}_{m}|a\in I, m\in \mathbb{Z}_{\geq 0} \}$ be a solution of the unrestricted $Q$-system. The following condition
$$
\left[\begin{array}{cccc}
Q^{(a-2)}_{m-1} & Q^{(a-1)}_{m-1} & Q^{(a)}_{m-1}  \\
Q^{(a-2)}_{m} & Q^{(a-1)}_{m}  & Q^{(a)}_{m}  \\
Q^{(a-2)}_{m+1} & Q^{(a-1)}_{m+1}  & Q^{(a)}_{m+1} 
\end{array}\right]=
\left[\begin{array}{cccc}
 * & 0 & 0 \\
 1 & 1 & Q^{(a)}_{m}  \\
 * & * & *
\end{array}\right]$$
implies $Q^{(a)}_{m}=1$ for $2\leq a\leq r-2$ where $*$ denotes an arbitrary number.
\end{lemma}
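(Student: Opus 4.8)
The plan is to apply the recursion (\ref{4:Qsys}) not at the node $a$ but at the node $a-1$, exactly in the spirit of the proof of Lemma \ref{arr0}. Among the prescribed entries of the displayed array, the ones that carry usable information are $Q^{(a-1)}_{m-1}=0$, $Q^{(a-2)}_{m}=1$ and $Q^{(a-1)}_{m}=1$; the unknown $Q^{(a)}_{m}$ sits immediately to the right of $Q^{(a-1)}_{m}$ in the middle row, so it is natural to isolate it from the $Q$-system equation centered at the position $(a-1,m)$.

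First I would record the neighbor structure of the node $a-1$ in the finite diagram $D_r$. For $2\le a\le r-2$ we have $1\le a-1\le r-3$, so $a-1$ lies strictly before the fork node $r-2$ on the linear chain $1-2-\cdots-(r-2)$ and is therefore never a branch point. Hence the neighbors of $a-1$ are exactly $a-2$ and $a$, and the recursion (\ref{4:Qsys}) at $(a-1,m)$ reads
$$\left(Q^{(a-1)}_{m}\right)^2=Q^{(a-2)}_{m}\,Q^{(a)}_{m}+Q^{(a-1)}_{m-1}Q^{(a-1)}_{m+1}.$$
Substituting the prescribed values $Q^{(a-1)}_{m}=1$, $Q^{(a-2)}_{m}=1$ and $Q^{(a-1)}_{m-1}=0$ collapses this to $1=Q^{(a)}_{m}+0$, that is, $Q^{(a)}_{m}=1$. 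Note that the other zero in the array, $Q^{(a)}_{m-1}=0$, is not needed here; it belongs to the hypotheses under which the lemma is applied rather than to this one-line computation.

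The only point needing care — and the main, though minor, obstacle — is the boundary case $a=2$, where $a-1=1$ and the node $1$ has the single neighbor $2$ in $D_r$, so that $a-2=0$ is not an honest index of $I$. This is precisely the purpose of the convention $Q^{(0)}_{m}=1$: with it the neighbor product $\prod_{b}\bigl(Q^{(b)}_{m}\bigr)^{\mathcal{I}(D_r)_{1b}}=Q^{(2)}_{m}$ can be written uniformly as $Q^{(0)}_{m}Q^{(2)}_{m}$, and the hypothesis $Q^{(a-2)}_{m}=1$ reduces to the tautology $Q^{(0)}_{m}=1$. With this convention the single displayed computation covers the whole range $2\le a\le r-2$ simultaneously, and no further case distinction is required.
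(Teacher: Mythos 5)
Your proof is correct and follows essentially the same route as the paper: both apply the recursion (\ref{4:Qsys}) at the node $a-1$, substitute $Q^{(a-1)}_{m}=1$, $Q^{(a-2)}_{m}=1$, $Q^{(a-1)}_{m-1}=0$, and read off $Q^{(a)}_{m}=1$. Your additional remarks --- that $a-1$ is never the branch node for $2\le a\le r-2$ and that the convention $Q^{(0)}_{m}=1$ handles the case $a=2$ --- are accurate and only make explicit what the paper leaves implicit.
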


\begin{proof}
Let us look at (\ref{4:Qsys}) for $Q^{(a-1)}_{m}$,
$$(Q^{(a-1)}_{m})^2=Q^{(a-2)}_{m}Q^{(a)}_{m}+Q^{(a-1)}_{m-1}Q^{(a-1)}_{m+1}.$$
Under the condition stated above, we obtain $1^2=1\cdot Q^{(a)}_{m}+0$. This proves $Q^{(a)}_{m}=1$.
\end{proof}

\begin{theorem}\label{main3}
$z^{(a)}_{k+h}=1$ for $1\le a\le r-2$ and $z^{(r-1)}_{k+h}=z^{(r)}_{k+h}=\pm 1$.
\end{theorem}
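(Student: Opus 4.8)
The plan is to propagate the value $1$ along the Dynkin diagram by induction on the node index $a$, seeding the induction with the already-known value $z^{(1)}_{k+h}=1$ and pushing it forward by Lemma \ref{arr1}, and then to treat the two fork nodes $r-1,r$ separately using the bare recursion (\ref{4:Qsys}) together with the extended Dynkin diagram symmetry.

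First I would collect the inputs, all at the single row index $m=k+h$. From Theorem \ref{main2} I have $z^{(a)}_{k+h-1}=0$ for every $a\in I$, since $k+h-1=k+(h-1)$ with $1\le h-1\le h-1$. From Proposition \ref{1prop} I have $z^{(1)}_{k+h}=1$, and with the convention $z^{(0)}_{m}=1$ the node $0$ carries the value $1$ as well. These are precisely the $0$'s and $1$'s demanded by the hypothesis array of Lemma \ref{arr1}. The induction then runs as follows: assume $z^{(a-2)}_{k+h}=z^{(a-1)}_{k+h}=1$ for some $a$ with $2\le a\le r-2$. The $3\times 3$ array of Lemma \ref{arr1} for columns $a-2,a-1,a$ and rows $k+h-1,k+h,k+h+1$ has top row $(\,*,\,z^{(a-1)}_{k+h-1},\,z^{(a)}_{k+h-1})=(\,*,0,0)$ by Theorem \ref{main2} and middle row $(z^{(a-2)}_{k+h},z^{(a-1)}_{k+h},z^{(a)}_{k+h})=(1,1,z^{(a)}_{k+h})$ by the inductive hypothesis, so Lemma \ref{arr1} yields $z^{(a)}_{k+h}=1$. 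The base case $a=2$ uses $z^{(0)}_{k+h}=1$ and $z^{(1)}_{k+h}=1$. This gives $z^{(a)}_{k+h}=1$ for all $1\le a\le r-2$, and in particular $z^{(r-2)}_{k+h}=1$.

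For the fork nodes Lemma \ref{arr1} no longer applies, so I would invoke the recursion (\ref{4:Qsys}) directly. Since node $r-1$ is joined only to node $r-2$ in $D_r$, at $m=k+h$ the recursion reads $(z^{(r-1)}_{k+h})^2=z^{(r-2)}_{k+h}+z^{(r-1)}_{k+h-1}z^{(r-1)}_{k+h+1}=1+0=1$, using $z^{(r-2)}_{k+h}=1$ and $z^{(r-1)}_{k+h-1}=0$; hence $z^{(r-1)}_{k+h}=\pm 1$, and by the same computation $z^{(r)}_{k+h}=\pm 1$. The remaining point, that the two spinor values genuinely coincide rather than merely both being $\pm 1$, I would settle using the automorphism of the extended Dynkin diagram $D_r^{(1)}$ interchanging the nodes $r-1$ and $r$ and fixing $0$. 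Writing $z^{(r-1)}_{k+h}=\mathcal{D}_{-h\hat{\omega}_0+(k+h)\hat{\omega}_{r-1}}$, and noting that $-h\hat{\omega}_0+(k+h)\hat{\omega}_{r-1}\in\hat{P}^k$ because the marks $a_0=a_{r-1}=1$ give level $1\cdot(-h)+1\cdot(k+h)=k$, Theorem \ref{affinesym} forces $\mathcal{D}_{-h\hat{\omega}_0+(k+h)\hat{\omega}_{r-1}}=\mathcal{D}_{-h\hat{\omega}_0+(k+h)\hat{\omega}_{r}}$, that is $z^{(r-1)}_{k+h}=z^{(r)}_{k+h}$. Combined with the relation $z^2=1$ this gives the common value $\pm 1$.

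I expect the main obstacle to be the fork. Lemma \ref{arr1} is tailored to the linear segment of the diagram and is silent at the branch, so the spinor nodes must be handled by hand, and the raw recursion only pins each value down up to sign. Forcing the two signs to agree requires the diagram-symmetry input of Theorem \ref{affinesym}, and applying it here means feeding it affine weights with a negative coefficient at the $0$ node, i.e.\ weights lying outside $\hat{P}_{+}^{k}$; one must check that the quantum dimension, being the ratio of sine products in (\ref{6:qdimdef}), is still governed by the automorphism in that regime. Determining the actual sign as a function of $r\bmod 4$, which is the finer assertion of the unnumbered main theorem, would demand a separate and more delicate evaluation and is not obtained by these formal manipulations alone.
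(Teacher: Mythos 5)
Your proposal is correct, and the chain induction (seeding with $z^{(0)}_{k+h}=z^{(1)}_{k+h}=1$, using the vanishing of row $k+h-1$ from Theorem \ref{main2}, and pushing forward with Lemma \ref{arr1}) is exactly the paper's argument. The only divergence is at the fork. The paper never invokes Theorem \ref{affinesym} there: it also writes down the recursion \emph{at node $r-2$}, which at row $k+h$ reads $(z^{(r-2)}_{k+h})^2=z^{(r-3)}_{k+h}\,z^{(r-1)}_{k+h}\,z^{(r)}_{k+h}$ after the term $z^{(r-2)}_{k+h-1}z^{(r-2)}_{k+h+1}$ drops out; since $z^{(r-3)}_{k+h}=z^{(r-2)}_{k+h}=1$ this gives $z^{(r-1)}_{k+h}z^{(r)}_{k+h}=1$, which combined with $(z^{(r-1)}_{k+h})^2=(z^{(r)}_{k+h})^2=1$ forces the two spinor values to be equal and $\pm1$ purely within the $Q$-system algebra. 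Your alternative---getting each square equal to $1$ from the recursions at $r-1$ and $r$, then equating the two values via the diagram automorphism swapping $r-1$ and $r$---is also valid: Theorem \ref{affinesym} as stated applies to all of $\hat{P}^k$, and for this particular automorphism (induced by the finite diagram automorphism of $D_r$) the invariance of the sine-product formula (\ref{6:qdimdef}) is immediate, so the worry you raise about weights with negative coefficient at the node $0$ is not an obstruction. What the paper's route buys is precisely that it sidesteps this issue altogether and uses one fewer external input; what yours buys is independence from the node-$(r-3)$ value, at the cost of re-entering the quantum-dimension formalism. Your closing remark that the sign as a function of $r\bmod 4$ requires a separate sine-product computation matches the paper, which relegates exactly that to a remark after the theorem.
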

\begin{proof}
Since $z^{(1)}_{k+h}=1$ by (\ref{1reunit}) and $z^{(a)}_{k+h-1}=0$ for any $a\in I$ by Theorem \ref{main2}, we get $z^{(a)}_{k+h}=1$ for $1\le a \le r-2$ by applying Lemma \ref{arr1}. The recursions  (\ref{4:Qsys}) for
 $z^{(r-2)}_{k+h}$, $z^{(r-1)}_{k+h}$ and $z^{(r)}_{k+h}$ give the following system of equations
 
$$
\left\{
\begin{array}{lll}
&(z^{(r-2)}_{k+h})^2=z^{(r-1)}_{k+h}z^{(r)}_{k+h}\\
&(z^{(r-1)}_{k+h})^2=z^{(r-2)}_{k+h}\\
&(z^{(r)}_{k+h})^2=z^{(r-2)}_{k+h}
\end{array}
\right..
$$
Then $z^{(r-2)}_{k+h}=1$ implies $z^{(r-1)}_{k+h}=z^{(r)}_{k+h}=\pm 1$.
\end{proof}

\begin{remark}
Analysis similar to that in the proof of Proposition \ref{1prop} using the product formula (\ref{6:qdimdef}) for the quantum dimension can be used to show $z^{(r-1)}_{k+h}=z^{(r)}_{k+h}=1$ when $r\equiv 0,1 \pmod 4$ and $z^{(r-1)}_{k+h}=z^{(r)}_{k+h}=-1$ when $r\equiv 2,3 \pmod 4$.
\end{remark}

\begin{example}
Let $X=D_{5}$ and $k=4$. We express $z_m^{(a)}$ in terms of quantum dimensions for $a\in I$ and $0\le m \le h+k=12$. For $a=1,4,5$, we get
$$
\begin{bmatrix}
 z_0^{(1)} & z_0^{(4)} & z_0^{(5)} \\
 z_1^{(1)} & z_1^{(4)} & z_1^{(5)} \\
 z_2^{(1)} & z_2^{(4)} & z_2^{(5)} \\
 z_3^{(1)} & z_3^{(4)} & z_3^{(5)} \\
 z_4^{(1)} & z_4^{(4)} & z_4^{(5)} \\
 z_5^{(1)} & z_5^{(4)} & z_5^{(5)} \\
\vdots & \vdots & \vdots \\
 z_{11}^{(1)} & z_{11}^{(4)} & z_{11}^{(5)} \\
 z_{12}^{(1)} & z_{12}^{(4)} & z_{12}^{(5)}
\end{bmatrix}
=
\begin{bmatrix}
\mathcal{D}_{4 \hat{\omega }_0} & \mathcal{D}_{4 \hat{\omega }_0} & \mathcal{D}_{4 \hat{\omega }_0} \\
 \mathcal{D}_{3 \hat{\omega }_0+\hat{\omega }_1} & \mathcal{D}_{3 \hat{\omega }_0+\hat{\omega }_4} & \mathcal{D}_{3 \hat{\omega }_0+\hat{\omega }_5} \\
 \mathcal{D}_{2 \hat{\omega }_0+2 \hat{\omega }_1} & \mathcal{D}_{2 \hat{\omega }_0+2 \hat{\omega }_4} & \mathcal{D}_{2 \hat{\omega }_0+2 \hat{\omega }_5} \\
 \mathcal{D}_{\hat{\omega }_0+3 \hat{\omega }_1} & \mathcal{D}_{\hat{\omega }_0+3 \hat{\omega }_4} & \mathcal{D}_{\hat{\omega }_0+3 \hat{\omega }_5} \\
 \mathcal{D}_{4 \hat{\omega }_1} & \mathcal{D}_{4 \hat{\omega }_4} & \mathcal{D}_{4 \hat{\omega }_5} \\
 0 & 0 & 0 \\
 \vdots &  \vdots &  \vdots \\
 0 & 0 & 0 \\
 \mathcal{D}_{4 \hat{\omega }_1} & \mathcal{D}_{4 \hat{\omega }_4} & \mathcal{D}_{4 \hat{\omega }_5}
\end{bmatrix}.
$$
For $a=2,3$, we have
$$
\begin{bmatrix}
 z_0^{(2)} & z_0^{(3)} \\
 z_1^{(2)} & z_1^{(3)} \\
 z_2^{(2)} & z_2^{(3)} \\
 z_3^{(2)} & z_3^{(3)} \\
 z_4^{(2)} & z_4^{(3)} \\
 z_5^{(2)} & z_5^{(3)} \\
\vdots & \vdots \\
 z_{11}^{(2)} & z_{11}^{(3)} \\
 z_{12}^{(2)} & z_{12}^{(3)}
\end{bmatrix}=
\begin{bmatrix}
 \mathcal{D}_{4 \hat{\omega }_0} & \mathcal{D}_{4 \hat{\omega }_0}  \\
 \mathcal{D}_{4 \hat{\omega }_0}+\mathcal{D}_{2 \hat{\omega }_0+\hat{\omega }_2} & \mathcal{D}_{3 \hat{\omega }_0+\hat{\omega }_1}+\mathcal{D}_{2 \hat{\omega }_0+\hat{\omega }_3} \\
 \mathcal{D}_{4 \hat{\omega }_0}+\mathcal{D}_{2 \hat{\omega }_2}+\mathcal{D}_{2 \hat{\omega }_0+\hat{\omega }_2} & \mathcal{D}_{2 \hat{\omega }_0+2 \hat{\omega }_1}+\mathcal{D}_{2 \hat{\omega }_3}+\mathcal{D}_{\hat{\omega }_0+\hat{\omega }_1+\hat{\omega }_3} \\
 \mathcal{D}_{4 \hat{\omega }_0}+\mathcal{D}_{2 \hat{\omega }_0+\hat{\omega }_2} & \mathcal{D}_{\hat{\omega }_0+3 \hat{\omega }_1}+\mathcal{D}_{2 \hat{\omega }_1+\hat{\omega }_3} \\
 \mathcal{D}_{4 \hat{\omega }_0} & \mathcal{D}_{4 \hat{\omega }_1} \\
 0 & 0 \\
 \vdots &  \vdots\\
 0 & 0 \\
 \mathcal{D}_{4 \hat{\omega }_0} & \mathcal{D}_{4 \hat{\omega }_1}
\end{bmatrix}.$$
This expression is obtained by applying the shifted affine Weyl group action together with Theorem \ref{6:Sfixed0} and \ref{alcoverep}.
\end{example}

\section*{Acknowledgements}
This work is an outgrowth of the author's doctoral research and he would like to thank his advisor Richard E. Borcherds. He is also grateful to Tomoki Nakanishi for helpful discussions at the MSRI workshop on Cluster Algebras and Commutative Algebra. The work is partially supported by Samsung Scholarship.
\bibliographystyle{amsalpha}
\bibliography{KNSconjecture}
\end{document}